\DeclareMathAlphabet{\mathpzc}{OT1}{pzc}{m}{it}
\newcommand{\cC}{{\mathcal C}}
\newcommand{\pa}{\parallel}
\newcommand{\C}{\mathcal{C}}
\newcommand{\N}{\mathcal{N}}
\newcommand{\I}{\mathcal{I}}
\newtheorem{thm}{Theorem}[section]
\newtheorem{lem}[thm]{Lemma}
\newtheorem{prop}[thm]{Proposition}
\newtheorem{cor}[thm]{Corollary}
\begin{document}

\renewcommand{\thefootnote}{\arabic{footnote}}
 	
\title{{\bf A cevian locus and the geometric construction of a special elliptic curve}}

\author{\renewcommand{\thefootnote}{\arabic{footnote}}
Igor Minevich and Patrick Morton}
\maketitle

\begin{section}{Introduction}

In previous papers \cite{mm1}, \cite{mm3}, \cite{mmv} we have studied several conics defined for an ordinary triangle $ABC$ relative to a given point $P$, not on the sides of $ABC$ or its anticomplementary triangle, including the inconic $\mathcal{I}$ and circumconic $\tilde \cC_O$.  These two conics are defined as follows.  Let $DEF$ be the cevian triangle of $P$ with respect to $ABC$ (i.e., the diagonal triangle of the quadrangle $ABCP$).  Let $K$ denote the complement map and $\iota$ the isotomic map for the triangle $ABC$, and set $P'=\iota(P)$ and $Q=K(P')=K(\iota(P))$.  Furthermore, let $T_P$ be the unique affine map taking $ABC$ to $DEF$, and $T_{P'}$ the unique affine map taking $ABC$ to the cevian triangle for $P'$.  \medskip

The inconic $\mathcal{I}$ for $P$ with respect to $ABC$ is the unique conic which is tangent to the sides of $ABC$ at the traces (diagonal points) $D,E,F$.  (See \cite{mm1}, Theorem 3.9 for a proof that this conic exists.) If $\N_{P'}$ is the nine-point conic of the quadrangle $ABCP'$ relative to the line at infinity $l_\infty$, then the circumconic $\tilde \cC_O$ is defined to be $\tilde \cC_O=T_{P'}^{-1}(\N_{P'})$.  (See \cite{mm3}, Theorems 2.2 and 2.4; and \cite{co1}, p. 84.)  These two conics are generalizations of the classical incircle and circumcircle of a triangle, and coincide with these circles when the point $P=Ge$ is the Gergonne point of the triangle.  In that case, the point $Q=I$ is the incenter.  In general, the point $Q$ is the center of the inconic $\I$.  The center $O$ of the circumconic $\tilde \cC_O$ is given by the affine formula
$$O=T_{P'}^{-1} \circ K(Q),$$
since the center of the conic $\N_{P'}$ turns out to be $K(Q)$.  \medskip

\begin{figure}
\[\includegraphics[width=5.5in]{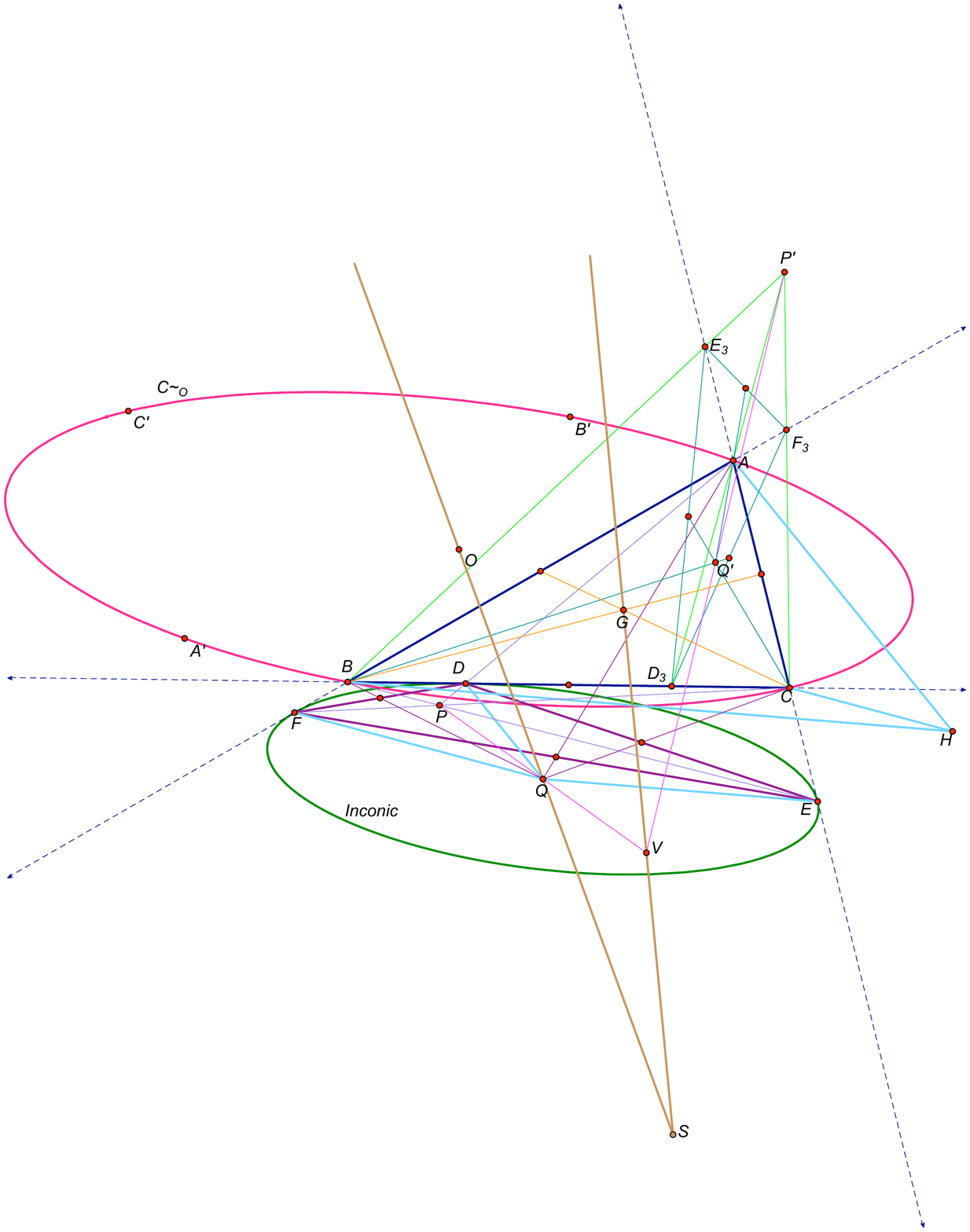}\]
\caption{The conics $\tilde{\mathcal{C}}_O$ (strawberry) and $\mathcal{I}$ (green).}
\label{fig:conics}
\end{figure}

We also showed in \cite{mm3}, Theorem 3.4, that the affine map $\textsf{M}=T_P \circ K^{-1} \circ T_{P'}$ is a homothety or translation which maps the circumconic $\tilde{\mathcal{C}}_O$ to the inconic $\mathcal{I}$.  If $G$ is the centroid of $ABC$ and $Q'=K(P)$, then the center of the map $\textsf{M}$ is the point
$$S=OQ \cdot GV = OQ \cdot O'Q', \ \ \textrm{where} \ V=PQ \cdot P'Q',$$
and $O'=T_{P}^{-1} \circ K(Q')$ is the generalized circumcenter for the point $P'$. \medskip

In \cite{mmv} we showed that for a fixed triangle $ABC$ the locus of points $P$, for which the map $\textsf{M}$ is a translation, is an elliptic curve minus $6$ points, and that this elliptic curve has infinitely many points defined over $\mathbb{Q}(\sqrt{2})$.  Thus, there are infinitely many points $P$ for which the conics $\I$ and $\tilde \cC_O$ are congruent to each other.  In this note we determine the remaining points $P$ for which these two conics are congruent by determining (synthetically) the locus of points $P$ for which the map $\textsf{M}$ is a half-turn.  We show, for example, that $\textsf{M}$ being a half-turn is equivalent to the point $P$ lying on the circumconic $\tilde \cC_{O'}$, where $O'=T_P^{-1} \circ K(Q')$ is the generalized circumcenter for the point $P'$, and this is also equivalent to the point $P'=\iota(P)$ lying on the circumconic $\tilde \cC_O$.  (By contrast, we showed in \cite{mmv} that $\textsf{M}$ is a translation if and only if the point $P$ lies on $\tilde \cC_O$.)  This is interesting, since if $P=Ge$ is the Gergonne point of $ABC$, then $P'=Na$ is the Nagel point, which always lies {\it inside} the circumcircle $\tilde \cC_O$.  Given triangle $ABC$, the locus of all such points $P$ turns out to be another elliptic curve (minus $6$ points; see Theorem \ref{thm:curveE}).  As in \cite{mmv}, this elliptic curve can be constructed synthetically using a locus of affine maps defined for points on certain open arcs of a conic.  In \cite{mmv} the latter conic was a hyperbola, while here the conic needed to construct the elliptic curve is a circle. (See Figure \ref{fig:3.3}.) \medskip

We adhere to the notation of \cite{mm1}: $D_0E_0F_0$ is the medial triangle of $ABC$, with $D_0$ on $BC$, $E_0$ on $CA$, $F_0$ on $AB$ (and the same for further points $D_i,E_i,F_i$); $DEF$ is the cevian triangle associated to $P$; $D_2E_2F_2$ the cevian triangle for $Q=K \circ \iota(P)$; $D_3E_3F_3$ the cevian triangle for $P'=\iota(P)$; and $G$ the centroid of $ABC$.  As above, $T_P$ and $T_{P'}$ are the unique affine maps taking triangle $ABC$ to $DEF$ and $D_3E_3F_3$, respectively.  See \cite{mm1} and \cite{mm2} for the properties of these maps.  Also, the generalized orthocenter for $P$ with respect to $ABC$ is the point $H=K^{-1}(O)$, which is also the intersection of the lines through the vertices $A,B,C$ which are parallel, respectively, to the lines $QD, QE, QF$.  Finally, the point $Z$ is defined to be the center of the cevian conic $\cC_P=ABCPQ$.  (See \cite{mm2}.)  \smallskip

We also refer to the papers \cite{mm1}, \cite{mm2}, \cite{mm3}, and \cite{mm4} as I, II, III, IV, respectively.  See \cite{ac}, \cite{co1}, \cite{co2} for results and definitions in triangle geometry and projective geometry.

\end{section}

\begin{section}{The locus of $P$ for which $\textsf{M}$ is a half-turn.}

In this section we determine necessary and sufficient conditions for the map $\textsf{M}$ to be a half-turn.  We start with the following lemma.

\begin{lem}
\label{lem:median}
\begin{enumerate}[(a)]
\item If the point $P$ (not on a side of $ABC$ or $K^{-1}(ABC)$) lies on the Steiner circumellipse $\iota(l_\infty)$ of $ABC$, then the map $\textsf{M}= T_P \circ K^{-1} \circ T_{P'}$ is a homothety with ratio $k=4$, and is therefore not a half-turn.
\item If the point $P$ lies on a median of triangle $ABC$, but does not lie on the Steiner circumeellipse $\iota(l_\infty)$ of $ABC$, then $\textsf{M}= T_P \circ K^{-1} \circ T_{P'}$ is not a half-turn.
\end{enumerate}
\end{lem}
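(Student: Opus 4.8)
The plan is to pass to homogeneous barycentric coordinates $P=(p:q:r)$ relative to $ABC$, in which the Steiner circumellipse $\iota(\li)$ has equation $pq+qr+rp=0$ (equivalently $P'=\iota(P)$ lies on $\li$, since $\iota$ is an involution) and the three medians are $q=r$, $p=r$, $p=q$. Each affine map involved has an explicit $3\times 3$ barycentric matrix with equal column sums: $K^{-1}$ is the homothety of ratio $-2$ about the centroid, so $K^{-1}(X)=A+B+C-2X$, while $T_P$ and $T_{P'}$ send the vertices to the traces of the cevian triangles of $P$ and of $P'=(qr:pr:pq)$. The elementary fact I will use is that a homothety (or translation) of ratio $k$ has barycentric matrix of trace $2k+1$ and Jacobian (signed area ratio) equal to $k^2$; since Theorem 3.4 of III already guarantees that $\textsf{M}$ is a homothety or translation, either invariant determines $k$, and $\textsf{M}$ is a half-turn precisely when $k=-1$.

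For part (b) I would work on the median $q=r$, where $P=(p:q:q)$ and, because $\iota$ fixes this median, $P'=(q:p:p)$; both cevian triangles then share the vertex $D=D_3=D_0$, the midpoint of $BC$. Composing the three maps on the vertices $A$ and $B$ (a short computation using $K^{-1}(X)=A+B+C-2X$) gives $\textsf{M}(A)=D_0$ and an explicit barycentric expression for $\textsf{M}(B)$, from which $\textsf{M}(B)-\textsf{M}(A)=-\frac{2pq}{(p+q)^2}(B-A)$, so the ratio is
\[
k=-\frac{2pq}{(p+q)^2}.
\]
The half-turn condition $k=-1$ then reads $2pq=(p+q)^2$, i.e.\ $p^2+q^2=0$, which has no solution with $(p,q)$ real and $P$ admissible; hence $\textsf{M}$ is not a half-turn on this median, and the analogous computation (or the permutation symmetry in $A,B,C$) handles the other two medians.

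For part (a) I would first compute the Jacobian. A direct evaluation gives $\det T_P=\det T_{P'}=\dfrac{2pqr}{(p+q)(q+r)(r+p)}$, so
\[
k^2=\det\textsf{M}=\det T_P\cdot\det K^{-1}\cdot\det T_{P'}=4\Bigl(\tfrac{2pqr}{(p+q)(q+r)(r+p)}\Bigr)^2 .
\]
Using the identity $(p+q)(q+r)(r+p)=(p+q+r)(pq+qr+rp)-pqr$, the hypothesis $pq+qr+rp=0$ collapses the denominator to $-pqr$, whence $k^2=16$ and $k=\pm4$; in particular $k\neq-1$, so $\textsf{M}$ is not a half-turn. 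The remaining issue, and the one real obstacle, is to pin down the sign, since the determinant sees only $k^2$. I would resolve this by noting that the point $(1:-2:-2)$ lies on both the median $q=r$ and the Steiner circumellipse, and there the formula from part (b) gives $k=+4$. Since $\iota(\li)$ passes through $A,B,C$ and is tangent to the anticomplementary triangle $K^{-1}(ABC)$ at those vertices, the admissible locus is the ellipse minus $\{A,B,C\}$, three open arcs on each of which $k$ is a continuous $\{\pm4\}$-valued function, hence constant. As the symmetry of the construction in $A,B,C$ permutes these three arcs transitively and preserves $k$, the value $k=+4$ at $(1:-2:-2)$ propagates to the whole Steiner circumellipse, giving $k=4$ as claimed. (Alternatively one can compute $\mathrm{tr}\,\textsf{M}=2k+1$ directly from the three matrices and read off $k=4$, but the sign bookkeeping there is heavier.)
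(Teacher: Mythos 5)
Your proposal is correct — I checked the key computations (the cevian-triangle determinant $\det T_P=\det T_{P'}=\tfrac{2pqr}{(p+q)(q+r)(r+p)}$, the identity collapsing the denominator to $-pqr$ on $pq+qr+rp=0$, and the ratio $k=-\tfrac{2pq}{(p+q)^2}$ on the median $q=r$, which correctly returns $k=-\tfrac12$ at the centroid and $k=4$ at $(1:-2:-2)$) — but it takes a genuinely different route from the paper's, which is synthetic throughout. For (a) the paper invokes I, Theorem~3.14, namely that $P\in\iota(l_\infty)$ forces $T_PT_{P'}=K^{-1}$, and combines this with the symmetry $\textsf{M}=T_{P'}K^{-1}T_P$ to get $\textsf{M}=(T_{P'}T_P)^2=T_P^{-1}K^{-2}T_P$, so the ratio $4$ falls out \emph{with its sign} in one line; your determinant computation only sees $k^2=16$, and the extra continuity-plus-symmetry step to pin down $k=+4$ is the one delicate point of your argument (it is sound, since on the Steiner ellipse the degeneracy loci all reduce to $pqr=0$, so $k$ is a continuous $\{\pm4\}$-valued function on each of the three open arcs, which the vertex permutations permute transitively — though evaluating at all three points $(1:-2:-2),(-2:1:-2),(-2:-2:1)$ via your part~(b) formula would be even shorter). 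For (b) the paper argues by contradiction: assuming $\textsf{M}$ is a half-turn with $P$ on the median $AG$, it shows $S$ is the midpoint of $AD_0$ and then that $D=D_0$ would have to be simultaneously an interior point of $\tilde\cC_O$ (as the midpoint of the chord $BC$) and an exterior point (as the meet of two tangents, since $\textsf{M}(\mathcal I)=\tilde\cC_O$ forces $\tilde\cC_O$ to be inscribed in $\textsf{M}(ABC)$). Your explicit formula $k=-\tfrac{2pq}{(p+q)^2}$, with $k=-1$ equivalent to $p^2+q^2=0$, is more elementary and self-contained, proves the stronger statement that $\textsf{M}$ is never a half-turn anywhere on a median, and quantifies $k$ along the whole median; what it costs is the coordinate bookkeeping and the loss of the geometric picture that the paper's interior/exterior argument provides.
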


\begin{proof}
To prove (a), we use the result of I, Theorem 3.14, according to which $P$ lies on $\iota(l_\infty)$ if and only if the maps $T_P$ and $T_{P'}$ satisfy $T_P T_{P'}=K^{-1}$.  If this condition holds, then because the map $\textsf{M}$ is symmetric in $P$ and $P'$ (see III, Proposition 3.12b or \cite{mmv}, Section 3), 
$$\textsf{M}=T_{P'}K^{-1}T_P=T_{P'}T_P T_{P'}T_P=(T_{P'}T_P)^2=(T_P^{-1}K^{-1}T_P)^2=T_P^{-1}K^{-2}T_P.$$
The similarity ratio of the dilatation $K^{-1}$ is $-2$, so the similarity ratio of $K^{-2}$ is $4$, which proves part (a). \smallskip

For (b), suppose $P$ lies on the median $AG$ ($G$ the centroid of $ABC$) and the map $\textsf{M}$ is a half-turn. Then, since $P'$ also lies on $AG$, we have that $D=D_0=D_3$, so that
$$\textsf{M}(A)=T_P K^{-1} T_{P'}(A) = T_P K^{-1}(D_3)=T_P K^{-1}(D_0)=T_P(A)=D=D_0$$
 and the center $S$ of $\textsf{M}$ is the midpoint of $AD_0$.  In particular, $\textsf{M}(B)$ and $\textsf{M}(C)$ are the reflections in $S$ of $B$ and $C$ on the line $\ell=K^{-1}(BC)$.  We claim that the line $\ell$ is tangent to the circumconic $\tilde \cC_O$ at the point $A$.  This is because the affine reflection $\rho$ through the line $AG=AP$ in the direction of the line $BC$ takes the triangle $ABC$ to itself, and maps $P$ to $P$, so it also takes the circumconic $\tilde \cC_O$ to itself.  Hence the tangent to $\tilde \cC_O$ at $A$ maps to itself, which implies that it must be parallel to $BC$ (since the only other ordinary fixed line is $AG$, which lies on the center $O$ of $\tilde \cC_O$ and cannot be a tangent at an ordinary point).  But $\ell$ is the unique line through $A$ parallel to $BC$, so $\ell$ must be the tangent.  (Also see III, Corollary 3.5.)  It follows that the points $\textsf{M}(B)$ and $\textsf{M}(C)$, neither of which is $A$, must be exterior points of the conic $\tilde \cC_O$.  On the other hand, we claim that $D=D_0$ is an {\it interior} point of $\tilde \cC_O$.  This is because the segment $BC$, parallel to the tangent $\ell$ at $A$, is a chord of $\tilde \cC_O$, and $B$ and $C$ lie on the same branch of $\tilde \cC_O$, if the latter is a hyperbola (any tangent to a hyperbola separates the two branches).  It follows that the segments $D\textsf{M}(B)$ and $D\textsf{M}(C)$ join the interior point $D$ to exterior points, and so must each contain a point on $\tilde \cC_O$.  However, $\textsf{M}$ is a half-turn mapping the circumconic $\tilde \cC_O$ to the inconic $\I$, so that $\textsf{M}(\I)=\tilde \cC_O$.  Hence, $\tilde \cC_O$ must be inscribed in the triangle $\textsf{M}(ABC) = D\textsf{M}(B)\textsf{M}(C)$, meaning that $\tilde \cC_O$ touches all three extended sides of the triangle.  But by what we just showed the intersections of $\tilde \cC_O$ with the sides of $D\textsf{M}(B)\textsf{M}(C)$ lie on the segments joining the vertices.  Hence, the point $D$ lies on the two tangents $b=D\textsf{M}(B)$ and $c=D\textsf{M}(C)$ to $\tilde \cC_O$, implying that $D$ is an {\it exterior} point of $\tilde \cC_O$.  This contradiction proves the lemma.
\end{proof}

\begin{prop}
\label{prop:half-turn}
If the points $P$ and $P'$ are ordinary and do not lie on the sides or medians of triangles $ABC$ and $K^{-1}(ABC)$, and $H$ does not coincide with a vertex of $ABC$, the following are equivalent:
\begin{enumerate}[(1)]
\item $\textsf{M} = T_P \circ K^{-1} \circ T_{P'}$ is a half-turn;
\item $P$ is on $\tilde{\cC}_{O'}$;
\item $P'$ is on $\tilde{\cC}_{O}$;
\item $T_P(P) = O'$;
\item $T_{P'}(P') = O$;
\item $O'$ lies on $\N_P$;
\item $O$ lies on $\N_{P'}$.
\end{enumerate}
\end{prop}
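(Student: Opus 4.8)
The plan is to pin down the half-turn condition by computing how $\textsf{M}$ moves the four ``centres'' $O,O',Q,Q'$. First I would record the four unconditional identities
\[\textsf{M}(O)=Q,\quad \textsf{M}(O')=Q',\quad \textsf{M}(Q)=T_{P'}(P'),\quad \textsf{M}(Q')=T_P(P).\]
For the first, $\textsf{M}(O)=T_P K^{-1}T_{P'}(O)=T_PK^{-1}(K(Q))=T_P(Q)=Q$, using $O=T_{P'}^{-1}K(Q)$ and the fixed-point relation $T_P(Q)=Q$ (from~I and~II); the second is its image under the $P\leftrightarrow P'$ symmetry of $\textsf{M}$ (III, Prop.~3.12b). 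For the third I would use the symmetric form $\textsf{M}=T_{P'}K^{-1}T_P$ together with $T_P(Q)=Q$ and $K^{-1}(Q)=P'$ (valid since $Q=K(P')$), giving $\textsf{M}(Q)=T_{P'}K^{-1}(Q)=T_{P'}(P')$; the fourth is its mirror.

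Next I would prove the core equivalence $(1)\Leftrightarrow(5)\Leftrightarrow(4)$. Since $\textsf{M}$ is always a dilatation and $\textsf{M}(O)=Q$ with $O\neq Q$ (a nondegeneracy the hypotheses secure), $\textsf{M}$ is a half-turn exactly when it is a nonidentity involution, and a dilatation carrying $O\mapsto Q$ is an involution precisely when it also carries $Q\mapsto O$ (a dilatation interchanging two distinct points must be the point-reflection in their midpoint, of ratio $-1$). By the third identity, $\textsf{M}(Q)=O$ reads $T_{P'}(P')=O$, which is $(5)$; applying the $P\leftrightarrow P'$ symmetry converts this into $T_P(P)=O'$, which is $(4)$. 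Hence $(1)\Leftrightarrow(4)\Leftrightarrow(5)$, with the only inputs being the fixed-point identities and the symmetry of $\textsf{M}$.

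To bring in the conics I would use the $P'$-analogue of $\tilde{\cC}_O=T_{P'}^{-1}(\N_{P'})$, namely $\tilde{\cC}_{O'}=T_P^{-1}(\N_P)$, whose centre is $O'=T_P^{-1}(K(Q'))$ and in which $K(Q')$ is the centre of $\N_P$. Applying the bijection $T_P$ gives $(2)\Leftrightarrow T_P(P)\in\N_P$, whereas $(4)$ reads $T_P(P)=O'$ and $(6)$ reads $O'\in\N_P$; thus the moment $T_P(P)=O'$ holds, conditions $(2)$ and $(6)$ literally coincide, and symmetrically for $(3),(5),(7)$. It then remains to show that on the half-turn locus one genuinely has $O'\in\N_P$ (equivalently $P\in\tilde{\cC}_{O'}$) together with the mirror statement $O\in\N_{P'}$. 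I would establish this by adapting the argument of \cite{mmv} used there for the translation case ($\textsf{M}$ a translation iff $P\in\tilde{\cC}_O$): track $\textsf{M}$ simultaneously on the two circumconics, $\tilde{\cC}_O\to\I$ and $\tilde{\cC}_{O'}\to$ the inconic of $P'$ (centre $Q'$), feeding in $\textsf{M}(Q')=T_P(P)$ from the first step.

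The hard part is exactly this last step, i.e. the implications \emph{from} the conic incidences $(2),(3),(6),(7)$ \emph{back} to the half-turn. The subtlety is that, read as point-equalities, $(4)$ and $(5)$ vanish on the half-turn curve of Theorem~\ref{thm:curveE} but their full zero sets also carry extraneous points (precisely the degenerate configurations where $O'=Q'$, respectively $O=Q$, so that $\textsf{M}$ fixes a centre without being a half-turn), and the conic incidences behave the same way; the standing hypotheses—$P,P'$ ordinary and off the sides and medians of $ABC$ and $K^{-1}(ABC)$, and $H$ not a vertex—are exactly what is needed to discard these extraneous components and identify all seven loci with the single curve. By contrast the first two paragraphs are clean and self-contained, resting only on $T_P(Q)=Q$, $T_{P'}(Q')=Q'$ and the symmetry of $\textsf{M}$.
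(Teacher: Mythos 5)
Your first two paragraphs are sound and essentially reproduce the easy half of the paper's proof. The four identities $\textsf{M}(O)=Q$, $\textsf{M}(O')=Q'$, $\textsf{M}(Q)=T_{P'}(P')$, $\textsf{M}(Q')=T_P(P)$ are all correct, and the observation that a dilatation carrying $O\mapsto Q$ with $O\neq Q$ is a half-turn exactly when it also carries $Q\mapsto O$ gives $(1)\Leftrightarrow(5)$ cleanly; the paper does the mirror computation with $O',Q'$ to get $(1)\Leftrightarrow(4)$ (arguing via $\textsf{M}^2(O')=O'$ with $O'\neq S$) and uses the reflection $\eta$ for $(4)\Leftrightarrow(5)$, so up to relabelling this is the same argument. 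You do need to justify $O\neq Q$ and $O'\neq Q'$; the paper pulls this from III, Theorem 3.9, using that $P$ is off the medians and, since $P'$ is ordinary, off $\iota(l_\infty)$.

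The genuine gap is the bridge between $\{(1),(4),(5)\}$ and $\{(2),(3),(6),(7)\}$, which is where almost all of the paper's work goes and which your proposal replaces with ``adapt the argument of \cite{mmv}'' plus a discussion of extraneous components. Two concrete things are missing. First, your reduction only shows that $(2)$ and $(6)$ coincide \emph{assuming} $(4)$; what is actually needed (and what the paper proves) is the unconditional equivalence $(3)\Leftrightarrow(7)$, via the fact that the centre of any conic through the four vertices of $ABCP'$ lies on $\N_{P'}$, and conversely that a point of $\N_{P'}$ which is not a side-midpoint is the centre of a \emph{unique} such conic --- this is precisely where the hypothesis that $H$ is not a vertex enters, and your proposal never isolates this step. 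Second, and more seriously, neither $(5)\Rightarrow(7)$ nor $(3)\Rightarrow(5)$ is actually argued. The paper's proof of $(5)\Rightarrow(7)$ shows that $Z=T_{P'}(K^{-1}(Z))$ lies on both $K(Q)O$ and $K(P'O)\parallel OP'$, deduces $QZ\parallel OP'$, and then uses the congruent triangles $QK(Q)Z$ and $P'K(Q)O$ to conclude that $K(Q)$ (the centre of $\N_{P'}$) is the midpoint of $OZ$, whence $O\in\N_{P'}$; the proof of $(3)\Rightarrow(5)$ identifies $P'$ with the fourth intersection $R_OK^{-1}(Z)$ of $\cC_P$ and $\tilde\cC_O$ (III, Theorem 3.14) and must then rule out $OP'=OQ$ by showing that otherwise $Q,H,P'$ would be three distinct collinear points of the conic $\cC_P$. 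None of this geometry is present in, or readily extracted from, the translation-case argument you propose to adapt, so as written the proposal establishes only $(1)\Leftrightarrow(4)\Leftrightarrow(5)$.
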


\begin{proof} 
First note that (4) $\iff$ (5): this follows on applying the affine reflection $\eta$ from Part II to (4) ($\eta$ is the harmonic homology with axis $GZ$ and center $PP' \cdot l_\infty$), and using that
$$\eta(P) = P', \ \ \eta(O) = O', \ \ \textrm{and} \ \ \eta \circ T_P = T_{P'} \circ \eta.$$
By III, Proposition 3.12, $\textsf{M}$ commutes with $\eta$, since $\eta$ commutes with $K$ and
$$\textsf{M} \circ K^{-1} = (T_P \circ K^{-1}) \circ (T_{P'} \circ K^{-1})  = (T_{P'} \circ K^{-1}) \circ (T_{P} \circ K^{-1}).$$
Hence, $T_{P'} \circ K^{-1} \circ T_P = \eta \textsf{M} \eta = \textsf{M}$.  This shows that the locus of points $P$ for which $\textsf{M}$ is a half-turn is invariant under $P \rightarrow P'$.    \smallskip
 
We now show that (1) is equivalent to (4) and (5).  Namely, if \textsf{M} is a half-turn, then since $\textsf{M}(O')=Q'$, we have to have $\textsf{M}(Q')=O'$.  But
$$\textsf{M}(Q') = T_P K^{-1} T_{P'}(Q') = T_P K^{-1}(Q') =T_P(P),$$
and so $O' = T_P(P)$.  Conversely, if $O' = T_P(P)$, then $\textsf{M}(Q')= O'$, so $\textsf{M}^2(O') = O'$, which implies that \textsf{M} must be a homothety with similarity ratio $k = \pm 1$, since $\textsf{M}^2$ fixes the point $O' \neq S$.  However, $k$ can't be $+1$, since in that case \textsf{M} is the identity and $O' = Q'$, impossible by the argument of III, Theorem 3.9.  Therefore, $k = -1$, so \textsf{M} is a half-turn.  (Note that $O' \neq S$, since otherwise $O=\eta(O')=\eta(S)=S$; but the points $O, O', Q, Q'$ are distinct, by the proof of III, Theorem 3.9, as long as $P$ does not lie on a median of $ABC$ or on $\iota(l_\infty)$.)  This shows that (1) $\iff$ (4) $\iff$ (5). \smallskip

Furthermore, if (3) holds, then $P'$ is on $\tilde{\cC}_O$, so the latter conic lies on the vertices of quadrangle $ABCP'$ (since $\tilde{\cC}_O$ is a circumconic), so the center $O$ must lie on $\N_{P'}$, by definition of the 9-point conic.  (See Part III, paragraph before Prop. 2.4.) Thus (3) implies (7).  Also, (7) implies (3), because $O$ being on $\N_{P'}$ implies $O$ is the center of a conic on $ABCP'$.  If $O$ is not the midpoint of a side of $ABC$ (which holds if and only if $H$ is not a vertex), there is a unique such conic, namely $\tilde{\cC}_O$.  Hence $P'$ lies on this conic.  This shows that (3) $\iff$ (7).  Similarly, (2) $\iff$ (6). \smallskip

Now suppose that (3) holds.  Then (7) holds, so $P' \in \tilde{\cC}_O$.  But $P' \in \cC_P$, so $P'$ is the fourth intersection of the circumconics $\cC_P$ and $\tilde{\cC}_O$, and therefore coincides with the point $\tilde Z = R_OK^{-1}(Z)$, by III, Theorem 3.14; here $R_O$ is the half-turn about $O$ and $Z$ is the center of $\cC_P$.
Now, in the proof of III, Theorem 3.14 we showed that $T_{P'}(\tilde{Z})=T_{P'}(P')$ lies on $OP'$. But $T_{P'}(P')$ lies on $OQ$ (III, Proposition 3.12), so this forces $T_{P'}(P') = O$, i.e. (5), provided we can show that the line $OP'$ is distinct from $OQ$. \smallskip

However, if $OP' = OQ$, then $OP' = P'Q = QG$ so $O, Q, G$ are collinear. Then $K^{-1}(O) = H$ is also on this line, so $Q, H, P'$ are all on this line.  We claim that these three points, $Q, H, P'$, must all be distinct by our hypothesis on $P$.  If $P' = Q =K(P')$ then $P' = G = P$, which can't hold because $P$ is not on a median of $ABC$.  If $Q = H$, then using the map $\lambda=T_{P'} \circ T_P^{-1}$ and III, Theorem 2.7 gives  $Q=\lambda(H)=\lambda(Q) = P'$, so $P'=G$.  Finally, if $P' = H$, then taking complements gives that $Q = O =T_{P'}^{-1}(K(Q))$ so $T_{P'}(Q) = K(Q)$, implying (by I, Theorem 3.7) that $P' = K(Q)=K^2(P')$ and therefore $P' = G = P$ once again. Therefore, the three distinct points $Q, H, P'$, which all lie on the conic $\cC_P$, are collinear, which is impossible. This shows that $OP' \cap OQ = O$, so $T_{P'}(P')=O$ and (3) $\Rightarrow$ (5) $\Rightarrow$ (1). \smallskip

For the rest, it suffices to show that (1) $\Rightarrow$ (3).  This is because of the symmetry of $\textsf{M}$ in $P$ and $P'$: for example, (3) $\Rightarrow$ (1) $\Rightarrow$ (2) (switching $P$ and $P'$) and conversely, so (2) and (3) are equivalent, as are (6) and (7), and everything is equivalent to (1).  Now assume (1).  We will prove (7).  Since (1) implies (5), we know $T_{P'}(P') = O$, so $T_{P'}(OP') = K(Q)O$ by the formula for $O$.  But by III, Corollary 3.13(b) we know $K^{-1}(Z)$ lies on $OP'$, so $T_{P'} \circ K^{-1}(Z) = Z$ (III, Prop. 3.10) lies on $K(Q)O$. But $Z$ also lies on $QN=K(P'O)$, which is parallel to $OP'$.  This easily implies $Z$ is an ordinary point and $QZ \pa OP'$; for this, note $Q \neq Z$ since $Z$ is the center of the conic $\cC_P$, while $Q$ is an ordinary point lying {\it on} $\cC_P$.  Also, $Z \neq K(Q)$, since $T_P \circ K^{-1}(Z) = Z$, while $T_P \circ K^{-1}(K(Q)) = Q$.  Therefore, using the fact that the lines $OP'$ and $OQ$ are distinct, it follows that $QK(Q)Z$ and $P'K(Q)O$ are similar triangles.  But $K(Q)$ is the midpoint of segment $P'Q$, so these triangles must be congruent.  Hence, $K(Q)$ is also the midpoint of segment $OZ$, and $O$ is the reflection of $Z$ in the point $K(Q)$.  Since $K(Q)$ is the center of $\N_{P'}$ and $Z$ lies on $\N_{P'}$, this means $O$ also lies on $\N_{P'}$, which is (7).
\end{proof}

\begin{cor}
\label{cor:ZS}
With the hypotheses of Proposition \ref{prop:half-turn}, the map $\textsf{M}$ is a half-turn if and only if $K^{-1}(S) = Z$, which holds if and only if $QZP'O$ is a parallelogram.
\end{cor}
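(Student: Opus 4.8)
The plan is to run everything off one unconditional affine identity and then exploit the symmetry map $\eta$. The engine is the relation $\textsf{M}(K^{-1}(Z)) = Z$, which holds with no extra hypotheses: writing $A_1 = T_P \circ K^{-1}$ and $A_2 = T_{P'} \circ K^{-1}$, we have $\textsf{M} \circ K^{-1} = A_1 A_2$ by III, Proposition 3.12, while $A_1(Z) = A_2(Z) = Z$ by III, Proposition 3.10; hence $\textsf{M}(K^{-1}(Z)) = A_1 A_2(Z) = Z$. Since $K$ is the homothety with center $G$ and ratio $-\tfrac12$, the point $K(Z)$ is exactly the midpoint of $Z$ and $K^{-1}(Z)$. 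I would record this midpoint identity, $K(Z) = \tfrac12\bigl(Z + K^{-1}(Z)\bigr)$, at the outset, since it is applied twice.

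With this in hand, the first equivalence, that $\textsf{M}$ is a half-turn if and only if $K^{-1}(S) = Z$, follows from the midpoint characterization of a half-turn's center. If $\textsf{M}$ is a half-turn with center $S$, then $S$ is the midpoint of $K^{-1}(Z)$ and its image $\textsf{M}(K^{-1}(Z)) = Z$, so $S = K(Z)$, i.e. $K^{-1}(S) = Z$. Conversely, if $K^{-1}(S) = Z$, then $S = K(Z)$ is the midpoint of $K^{-1}(Z)$ and its image $Z$ under the homothety $\textsf{M}$; as $K^{-1}(Z) \neq S$ (equivalently $Z \neq G$), this forces the ratio of $\textsf{M}$ to be $-1$, so $\textsf{M}$ is a half-turn.

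For the second equivalence I would first rewrite the parallelogram condition affinely, taking $G$ as origin so that $K(X) = -\tfrac12 X$ and $Q = K(P')$, whence $K(Q)$ is the midpoint of the diagonal $QP'$. Thus $QZP'O$ is a parallelogram exactly when $K(Q)$ is also the midpoint of the diagonal $ZO$, i.e. when $O = -Q - Z$; a one-line computation then gives that in this case the midpoint of $OQ$ equals $-\tfrac12 Z = K(Z)$, so $K(Z)$ lies on the line $OQ$. The crucial structural input is that $S$ lies on the axis $GZ$ of the affine reflection $\eta$: since $S = OQ \cdot O'Q'$ and $\eta$ interchanges the lines $OQ$ and $O'Q'$, we get $\eta(S) = S$, and because $S$ is an ordinary point it must lie on the axis $GZ$ (the only other fixed point of $\eta$ being its center $PP' \cdot l_\infty$ at infinity). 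Since $K(Z)$ also lies on $GZ$, the parallelogram condition forces $K(Z) = OQ \cdot GZ = S$, that is $K^{-1}(S) = Z$. For the reverse implication I would route through the half-turn: $K^{-1}(S) = Z$ gives that $\textsf{M}$ is a half-turn by the first equivalence, and the final paragraph of the proof of Proposition \ref{prop:half-turn} shows that a half-turn makes $O$ the reflection of $Z$ in $K(Q)$, which is the parallelogram condition. Chaining these yields that $K^{-1}(S) = Z$ if and only if $QZP'O$ is a parallelogram.

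The hard part is not the geometry but the bookkeeping of nondegeneracy, so that each midpoint and intersection argument is legitimate. One must ensure that $\textsf{M}$ is genuinely a homothety (so that $S$ exists as an ordinary point), that $Z \neq G$, that the lines $OQ$ and $GZ$ are distinct (so $S$ is their unique common point), and that $P' \neq Z$; all of these should follow from the hypotheses of Proposition \ref{prop:half-turn} together with the distinctness of $O, O', Q, Q'$ established in the proof of III, Theorem 3.9. Once those are secured, the single genuinely geometric observation is that $S$ is $\eta$-fixed and therefore lies on $GZ$, and everything else reduces to the midpoint identity for $K$.
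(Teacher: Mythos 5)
Your proof is correct, and its central mechanism is genuinely different from the paper's. The identity you isolate, $\textsf{M}(K^{-1}(Z)) = (T_P\circ K^{-1})\circ(T_{P'}\circ K^{-1})(Z) = Z$, holds unconditionally by III, Prop.\ 3.10, and together with the observation that $K(Z)$ is the midpoint of $Z$ and $K^{-1}(Z)$ it gives the equivalence ``$\textsf{M}$ is a half-turn $\iff S=K(Z)$'' in one stroke (the converse needing only $K^{-1}(Z)\neq S$, i.e.\ $Z\neq G$, which holds since $Z=G$ would make $\cC_P$ the Steiner circumellipse and hence $P'$ an infinite point). The paper never extracts this identity; it instead reaches $K^{-1}(S)=Z$ from the parallelogram $QZP'O$, using $P'Z\parallel QO$ to identify $P'Z$ with $K^{-1}(QO)=P'H$ and $Z$ with the midpoint of $P'H$, and it closes the converse by showing the parallelogram places $O$ on $\N_{P'}$, i.e.\ condition (7) of Proposition \ref{prop:half-turn}. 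Your treatment of the parallelogram clause also differs: you prove ``parallelogram $\Rightarrow K^{-1}(S)=Z$'' directly, by computing that the parallelogram condition makes $K(Z)$ the midpoint of $OQ$ and then invoking that $S$, being $\eta$-fixed and ordinary, lies on the axis $GZ$, so $S=OQ\cdot GZ=K(Z)$; the reverse implication you correctly delegate to the last paragraph of the proof of Proposition \ref{prop:half-turn}, exactly as the paper does. Both routes close the same cycle of three equivalences. What your version buys is a shorter, more self-contained proof of the first equivalence, relying only on the midpoint characterization of a half-turn rather than on the tangency and similar-triangle analysis; the price is a handful of additional nondegeneracy checks ($Z\neq G$, $OQ\neq GZ$, $S$ an ordinary point), all of which you flag and all of which do follow from the stated hypotheses --- for instance $OQ=GZ$ would put $Q$, hence $P'=K^{-1}(Q)$, on the pointwise-fixed axis of $\eta$, forcing $P=P'$ and hence $P=G$ or $P$ a vertex of $K^{-1}(ABC)$, both excluded.
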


\begin{proof}
If $\textsf{M}$ is a half-turn, the above argument shows that segments $QZ \cong OP'$; hence, $QZP'O$ is a parallelogram, and $P'Z \pa QO$.  Since $K(P')=Q$, this implies that line $P'Z$ is the same as the line $K^{-1}(QO) = P'H$ and $Z$ is the midpoint of $P'H$, since the map $K^{-1}$ doubles lengths of segments. But $S$ is the midpoint of $QO$, so $K^{-1}(S)=Z$ is the midpoint of $P'H$.  Conversely, if $K^{-1}(S) = Z$ then $Z$ lies on $K^{-1}(OQ)=P'H$ (since $S$ lies on $OQ$) and because $QZ \pa P'O$, $QZP'O$ is a parallelogram with center $K(Q)$, the midpoint of $P'Q$.  Hence, $O$ lies on $\N_{P'}$ and $\textsf{M}$ is a half-turn.
\end{proof}

\noindent {\bf Remark.} The condition of Corollary \ref{cor:ZS} is a necessary condition for $\textsf{M}$ to be a half-turn, without the hypothesis that $H$ not be a vertex.  This follows from the last paragraph in the proof of Proposition \ref{prop:half-turn}, since that hypothesis is not used to prove that (1) $\Rightarrow$ (7).  Lemma \ref{lem:median} then shows that $K^{-1}(S) = Z$ is a necessary condition for $\textsf{M}$ to be a half-turn, without any extra hypotheses.  \medskip

\begin{prop}
\label{prop:tangents}
If the hypotheses of Proposition \ref{prop:half-turn} hold and the map $\textsf{M}$ is a half-turn, then:
\begin{enumerate}[(1)]
\item The lines $O'P$ and $OP'$ are tangents to the conic $\cC_P$ at $P$ and $P'$.
\item The point $V = PQ \cdot P'Q'$ is the midpoint of segment $OO'$ and line $OO'=K^{-1}(PP')$.
\item On the line $GV$, the signed ratios $\frac{GS}{SV}=\frac{5}{3}$ and $\frac{ZG}{GV}=\frac{5}{4}$.
\end{enumerate}
\end{prop}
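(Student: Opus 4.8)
The plan is to reduce everything to one affine fact that locates the center $Z$ of $\Cp$, namely $Z=\tfrac14(5P+3P'-4Q)$, and then to read off the three parts. I would first record the purely formal consequences of the half-turn hypothesis. By Corollary~\ref{cor:ZS}, $QZP'O$ is a parallelogram whose center $K(Q)$ is the common midpoint of $QP'$ and of $ZO$; in particular $P'$ and $H$ are antipodal on $\Cp$, since $Z$ is the midpoint of $P'H$. Because $S$ is simultaneously the midpoint of $OQ$ and of $O'Q'$, the points $O,O',Q,Q'$ form a second parallelogram $OO'QQ'$ with center $S$. The reflection $\eta$ fixes $G,Z,S$ and interchanges $P\leftrightarrow P'$, $Q\leftrightarrow Q'$, $O\leftrightarrow O'$, so it fixes the midpoint $m$ of $PP'$ and the midpoint of $OO'$; both therefore lie on the axis $GZ$, and combining the two parallelograms gives $\vec{OO'}=\tfrac12\vec{P'P}$, so $OO'\pa PP'$.

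Writing $\vec{ZP}=a\,\vec{ZQ}+b\,\vec{ZP'}$, the fact that the midpoint of $OO'$ lies on $GZ$ forces $a=2b+2$ formally, and the whole proposition is then equivalent to pinning the remaining parameter to $b=-\tfrac35$. The geometric heart, and the step I expect to be the main obstacle, is the claim of Part~(1) that $O$ is the pole of the chord $QP'$ with respect to $\Cp$ — equivalently, that the diameters $ZQ$ and $ZP'$ are conjugate. Half of this is free: since $K(Q)$ is the midpoint of the chord $QP'$, the diameter conjugate to $QP'$ is the line $Z\,K(Q)$, which is exactly $ZO$ because $K(Q)$ is the midpoint of $ZO$; thus $O$ already lies on the correct conjugate diameter, and the content is only that it sits at the pole position on it. To prove this I would feed in the half-turn characterization $P'=R_OK^{-1}(Z)$ of III, Theorem~3.14 (so that $O$ is the midpoint of $P'$ and $K^{-1}(Z)$, with $K^{-1}(Z)$ on $OP'$ by III, Corollary~3.13(b)) together with the pole-polar relation along the diameter $ZO$. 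As a computational fallback one may use the barycentric equation $x(z^2-y^2)\,\eta\zeta+y(x^2-z^2)\,\zeta\xi+z(y^2-x^2)\,\xi\eta=0$ of $\Cp$, with $P=(x:y:z)$: here the chord $PP'$ has line-coordinates equal to the conic's own coefficients, which makes the pole of $QP'$ and the point $K^{-1}(Z)$ explicit, and the required equality then follows on the half-turn locus of \cite{mmv}.

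Granting this fact, Part~(1) is immediate: the two tangents to $\Cp$ from the pole $O$ touch at the endpoints $Q$ and $P'$ of the chord of contact $QP'$, so the tangent at $P'$ is the line $OP'$; applying $\eta$, which sends the chord $QP'$ and its pole $O$ to $Q'P$ and $O'$, shows the tangent at $P$ is $O'P$. Moreover the conjugacy of $ZQ$ and $ZP'$, together with $P$ lying on $\Cp$, forces $a^2+b^2=1$; with $a=2b+2$ this gives $b=-\tfrac35$, the extraneous root $b=-1$ being excluded because it would make $P=H$, contrary to the hypotheses (as in the distinctness arguments of the proof of Proposition~\ref{prop:half-turn}). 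This is the sought relation $Z=\tfrac14(5P+3P'-4Q)$.

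Parts~(2) and~(3) are then affine bookkeeping. For Part~(2), $\vec{OO'}=\tfrac12\vec{P'P}$ already gives $OO'\pa PP'$, while the relation $b=-\tfrac35$ places $K(O)$ on $PP'$, i.e.\ $O\in K^{-1}(PP')$, so $OO'=K^{-1}(PP')$; and the point $V=PQ\cd P'Q'$, which lies on $GZ$ by the $\eta$-symmetry, computes to the midpoint of $OO'$, as claimed. For Part~(3), I would place $Z,G,S,V$ on $GZ$ at parameters $0,\tfrac13,\tfrac12,\tfrac35$ along the segment from $Z$ to $K^{-1}(Z)$; the signed ratios are then $\frac{GS}{SV}=\frac{1/6}{1/10}=\frac53$ and $\frac{ZG}{GV}=\frac{1/3}{4/15}=\frac54$. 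The only substantive point is the pole/tangency fact underlying Part~(1); everything else is forced by $\eta$ and the two parallelograms.
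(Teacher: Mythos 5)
Your reduction of the whole proposition to the single fact that $ZQ$ and $ZP'$ are conjugate diameters of $\cC_P$ (equivalently, that $O$ is the pole of the chord $QP'$) is sound, and the affine bookkeeping that follows checks out: with $Z$ as origin the parallelograms give $O=Q+P'$, $K^{-1}(Z)=2Q+P'=3G$, the $\eta$-symmetry forces $a=2b+2$, conjugacy plus $P\in\cC_P$ forces $a^2+b^2=1$, and the root $b=-\tfrac35$ yields parts (2) and (3) exactly as you compute. But the crux — the conjugacy itself — is not proved. The two ingredients you propose to ``feed in,'' namely $P'=R_OK^{-1}(Z)$ and the incidence $K^{-1}(Z)\in OP'$ from III, Corollary 3.13(b), are purely affine statements: they pin down the position of $G$ (equivalently $K^{-1}(Z)=2Q+P'$) but say nothing about the polarity of $\cC_P$. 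Concretely, writing $c=Q^{T}MP'$ for the quadratic form $M$ of $\cC_P$ centered at $Z$, the pole of $QP'$ is the point $\frac{1}{1+c}\,O$ on the diameter $ZO$; your ingredients cannot determine $c$, so they cannot place the pole at $O$. The computational fallback is likewise only asserted. Since you yourself identify this as ``the main obstacle,'' leaving it at a sketch is a genuine gap.

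The gap is easily filled, and with an ingredient the paper itself uses in this very proof: by IV, Prop.\ 3.10, the line $OQ$ is tangent to $\cC_P$ at $Q$, so $O$ lies on the polar of $Q$, i.e.\ $Q^{T}MO=1$; since $O=Q+P'$ this gives $1+Q^{T}MP'=1$, hence $c=0$, which is exactly the conjugacy you need (and then $O^{T}MP'=1$ shows $OP'$ is the tangent at $P'$, with $O'P$ following by $\eta$). Note that the paper's own route to (1) is quite different and more laborious: it shows that $K^{-1}(Z)$ is the pole of $PP'$ by proving that $T_{P'}(P)$ and $T_P(P')$ are each conjugate to $K^{-1}(Z)$, using the tangent $O'Q'$ at $Q'$ and the map $\lambda=T_{P'}\circ T_P^{-1}$, and then derives (2) from the polars of $O$ and $O'$ and (3) from a chain of similar triangles. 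Your vector reduction, once the tangency input is supplied, is a leaner and essentially self-contained alternative for all three parts; please also make explicit where the hypotheses exclude the extraneous root $b=-1$ (it forces $P=-P'=H$, hence $Q'=K(P)=O$, contradicting the distinctness of $O,O',Q,Q'$ from III, Theorem 3.9).
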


\begin{figure}
\[\includegraphics[width=5.5in]{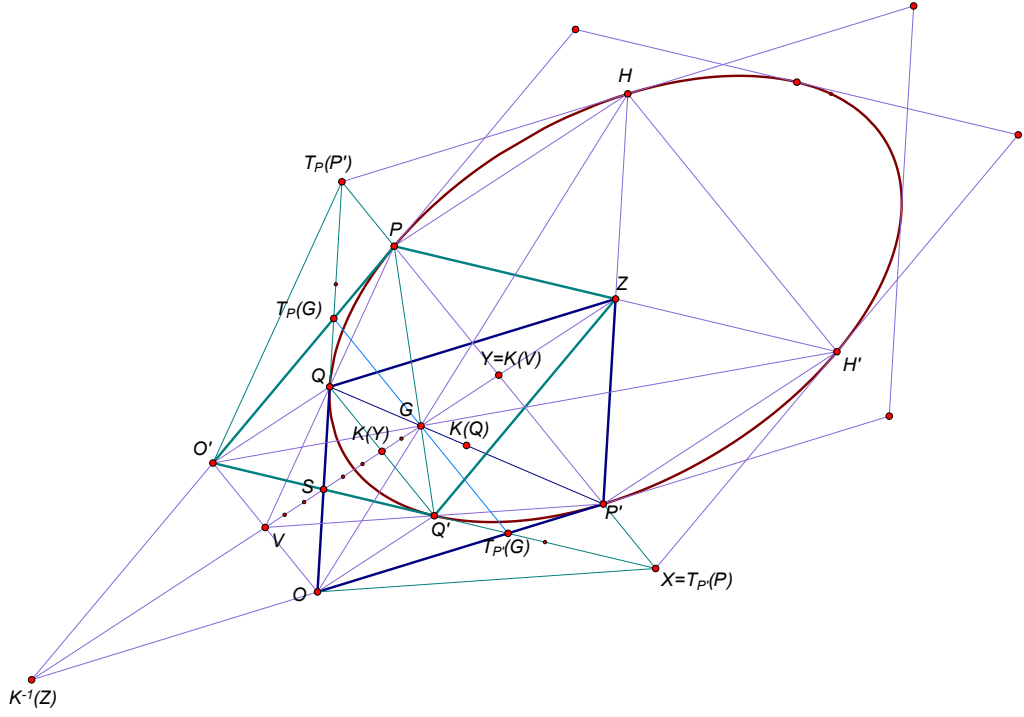}\]
\caption{The parallelograms $QZP'O$ and $Q'ZPO'$ when $\textsf{M}$ is a half-turn.}
\label{fig:3.1}
\end{figure}

\begin {proof} (See Figure \ref{fig:3.1}.)
(1)  As in the proof of Proposition \ref{prop:half-turn} we have $P'=\tilde{Z}=R_O \circ K^{-1}(Z)$, so $K^{-1}(Z)$ lies on $OP'$; by symmetry, it also lies on $O'P$.  We will show that pole of $PP'$ is $K^{-1}(Z)$.  Then (1) follows, since $K^{-1}(Z)$ is conjugate to both $P$ and $P'$, and so lies on the polars of $P$ and $P'$, which are the tangents to $\cC_P$ at $P$ and $P'$.  Hence, $K^{-1}(Z)P=O'P$ and $K^{-1}(Z)P'=OP'$ are tangents to $\cC_P$. \smallskip

We do this by showing that $T_{P'}(P)$ and $T_P(P')$ are conjugate to $K^{-1}(Z)$.  This implies that the polar of $K^{-1}(Z)$ is the join of $T_{P'}(P)$ and $T_P(P')$, which is $PP'$ by II, Corollary 2.2(c).  By symmetry it suffices to consider $T_{P'}(P)$.  We use the fact from Part IV, Prop. 3.10, that $O'Q'$ is tangent to $\cC_P$ at $Q'$.  Applying the map $\lambda$ gives that $\lambda(O'Q')$ is tangent to $\lambda(\cC_P)=\cC_P$ at $\lambda(Q') = H'$, by II, Theorem 3.2 and III, Theorem 2.7.  Using $T_P(P) = O'$ from Proposition \ref{prop:half-turn} we know that
$$\lambda(O') = T_{P'} \circ T_P^{-1}(O') = T_{P'}(P),$$
so $T_{P'}(P)$ lies on the tangent to $\cC_P$ at $H'$ and is conjugate to $H'$.  Also, $P, Q', K^{-1}(P)$ are collinear points, so applying the map $T_{P'}$ gives that
$T_{P'}(P)$ is collinear with $T_{P'}(Q')=Q'$ and
$$T_{P'} \circ K^{-1}(P) = T_{P'} \circ K^{-1} \circ T_P(Q') = \textsf{M}(Q') = O'.$$
Therefore, $T_{P'}(P)$ lies on the tangent $O'Q'$ and so is conjugate to the point $Q'$.  Thus, the polar of $T_{P'}(P)$ is $Q'H'$, which lies on $K^{-1}(Z)$ since $Z, K(Q'), O'=K(H')$ are collinear, using the fact from Corollary \ref{cor:ZS} that $Q'ZPO'$ is a parallelogram and $K(Q')$ is the midpoint of the diagonal $Q'P$.  This shows that $K^{-1}(Z)$ is conjugate to $T_{P'}(P)$, as desired.  Note that this also shows that $T_{P'}(P) \neq T_P(P')$, since the polar of $T_P(P')$ is $QH$, and $QH$ cannot be the same line as $Q'H'$, since the four points $Q,H,Q',H'$ all lie on the conic $\cC_P$. \smallskip

(2) From the fact that $OP'$ and $OQ$ (IV, Prop. 3.10) are tangent to $\cC_P$, it follows that $P'Q$ is the polar of $O$ with respect to $\cC_P$, and likewise, $PQ'$ is the polar of $O'$.  Hence, the pole of $OO'$ is $P'Q \cdot PQ' = G$.  On the other hand, the polar of $G$ is the line $VV_\infty$, where $V_\infty$ is the infinite point on the line $PP'$, by II, Proposition 2.3(a).  Therefore, $V$ lies on $OO'$.  Since $GV$ is the fixed line of the affine reflection $\eta$, $V$ must be the midpoint of segment $OO'$.  Then $V$ lies on the parallel lines $K^{-1}(PP')$ (II, Prop. 2.3(e)) and $OO'$, so $K^{-1}(PP')=OO'$. \smallskip

(3) From (2) we have $PP'=K(OO') = K^2(HH') = NN'$, where $N=K(O)$ and $N'=K(O')$ are the centers of the nine-point conics $\mathcal{N}_H$ and $\mathcal{N}_{H'}$.  Thus, the line $PP'$ is halfway between the parallel lines $OO'$ and $HH'$.  Taking complements, $QQ'$ is halfway between $NN'=PP'$ and $OO'$.  Also, the center $S$ of the map $\textsf{M}$ is located halfway between the lines $QQ'$ and $OO'$, since $\textsf{M}(OO')=QQ'$.  Let $X=T_{P'}(P)=O'Q' \cdot PP'=O'S \cdot PP'$ and $Y=K(V)=GV \cdot PP'$.  Then triangles $VO'S$ and $YXS$ are similar, with similarity ratio $1/3$, because $S$ is the midpoint of segment $O'Q'$ and $Q'$ is the midpoint of segment $O'X$, so that
$$|SX| = |SQ'|+|Q'X|=|SO'|+2|SO'|=3|SO'|.$$
Furthermore, $V$ on $OO'$ implies that $\textsf{M}(V)$ is on $QQ'$, so that $\textsf{M}(V)=QQ' \cdot GV$ is the midpoint of $VY=VK(V)$ and therefore coincides with $K(Y)$.  Then from $|SK(Y)|=|S\textsf{M}(V)|=|SV|$, $|K(Y)G|=\frac{1}{3}|K(Y)Y|$, and $|K(Y)Y|=|VK(Y)|=|V\textsf{M}(V)|=2|SV|$ we find that
\begin{align*}
\frac{|GS|}{|SV|}&=\frac{|SK(Y)|+|K(Y)G|}{|SV|}=\frac{|SV|+|K(Y)Y|/3}{|SV|}\\
&=\frac{|SV|+2|SV|/3}{|SV|}=\frac{5}{3}.
\end{align*}
Since $S$ lies between $G$ and $V$, this proves $\frac{GS}{SV}=\frac{5}{3}$.  Now $\frac{ZG}{GV}=\frac{2GS}{8SV/3}=\frac{3}{4} \frac{GS}{SV}=\frac{5}{4}$.
\end{proof}

\noindent {\bf Remark.} The conditions of Proposition \ref{prop:tangents} are also sufficient for $\textsf{M}$ to be a half-turn.  We leave the verification of this for parts (1) and (2) to the reader.  We will verify this for condition (3) in the next section. 

\end{section}

\begin{section}{Constructing the elliptic curve locus.}

Now suppose a parallelogram $QZP'O$ is given, and with it: $K(Q)$ as the midpoint of $QP'$; $G$ as the point for which the signed distance $QG$ satisfies $QG=\frac{1}{3}QP'$; and $S=K(Z)$ (Corollary \ref{cor:ZS}).  Then Proposition \ref{prop:tangents} shows that the point $V$ is determined by $G$ and $S$.  This determines, in turn, the points $P$ and $Q'$ uniquely, since $P$ is the reflection in $Q$ of the point $V$ (see II, p. 26) and $Q'=K(P)$.  Further, $O'$ is also determined as the reflection of $Q'$ in $S$, or as the reflection of $O$ in $V$.  Therefore, the parallelogram determines $P, Q, P', Q'$ and $H=K^{-1}(O)$, and hence the conic $\cC_P$ on these $5$ points (by III, Theorem 2.8).  Thus, any triangle $ABC$ for which $\textsf{M}$ is a half-turn with the given parallelogram $QZP'O$ must be inscribed in the conic $\cC_P$.  Furthermore, the affine maps $T_P$ and $T_{P'}$ are also determined, since
\begin{equation}
T_P(PQQ')=O'QP \ \ \textrm{and} \ \ T_{P'}(P'QQ')=OP'Q'.
\label{eqn:2.1}
\end{equation}
Defining the maps $T_P$ and $T_{P'}$ by (\ref{eqn:2.1}), we will show that $\textsf{M}=T_P \circ K^{-1} \circ T_{P'}$ is a half-turn about $S$.
\medskip

\begin{lem}
\label{lem:CwithG}
Given collinear and distinct ordinary points $G, V, Z$ and an ordinary point $P$ not on $GZ$, if $Q'=K(P)$ and $P'$ is the reflection of $V$ in the point $Q'$, and $Q$ is the midpoint of $PV$, then:
\begin{enumerate}[(a)]
\item there is a unique conic $\cC$ with center $Z$ which lies on $P, P', Q$, and $Q'$; 
\item with respect to any triangle $ABC$ with vertices on $\cC$ whose centroid is $G$, and whose vertices do not coincide with any of the points $P, P', Q$ or $Q'$, the point $P'$ is the isotomic conjugate of $P$, and $\cC$ coincides with the conic $\cC_P$ for $ABC$.  \end{enumerate}
\end{lem}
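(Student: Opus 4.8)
The plan is to reduce both parts to properties of the complement map $K$ — the homothety with center $G$ and ratio $-\tfrac12$, which depends only on $G$ and not on any particular triangle. First I would record the affine consequences of the construction: from $Q'=K(P)$, from $P'$ being the reflection of $V$ in $Q'$, and from $Q$ being the midpoint of $PV$, one computes $P'=3G-P-V$, $Q=K(P')$, and $P+P'+V=3G$. Hence $\{Q,Q'\}=K(\{P,P'\})$ and $G$ is the centroid of triangle $PP'V$.

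For (a) I would use the classical fact that the centers of the conics through four points $P,P',Q,Q'$ trace a conic passing through the three diagonal points of the quadrangle and the six midpoints of its sides. One of the diagonal points is $PQ\cdot P'Q'=V$ (since $V=2Q-P$ lies on $PQ$ and $V=2Q'-P'$ lies on $P'Q'$), while two of the midpoints are $K(V)$ (the midpoint of $PP'$) and $K^2(V)$ (the midpoint of $QQ'$). The three points $V,K(V),K^2(V)$ are distinct and collinear on $GV$, so this center locus contains the whole line $GV$ and therefore splits off $GV$ as a component. Consequently every $Z$ on $GV$ is the center of a conic through the four points, unique once $Z\ne K(V),K^2(V)$. (If desired, the uniqueness is confirmed by a short computation in affine coordinates with origin $G$ and basis $V-G,\,P-G$: writing the conic as a quadratic form centered at $Z=G+t(V-G)$, the four incidence conditions collapse after differencing to one linear relation together with two equations that pin the form down.)

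For (b) I would prove the conic assertion first. Since $K$ is the complement map of every triangle with centroid $G$, the point $Q'=K(P)$ is the complement of $P$ relative to $ABC$, so by I--III the cevian conic $\cC_P$ of $ABC$ passes through $A,B,C,P$ and through $Q'=K(P)$. The conic $\cC$ passes through these same five distinct points, and two distinct conics meet in at most four points, so $\cC=\cC_P$.

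The isotomic assertion is the crux, and the key idea is a remark about points at infinity. Once $\cC=\cC_P$ is known, $K^{-1}(\cC)$ is the image of the circumconic $\cC$ under the dilation $K^{-1}$ (center $G$, ratio $-2$); this dilation sends every point at infinity to itself, so the two points at infinity of $\cC$ also lie on $K^{-1}(\cC)$. These account for two of the four common points of the two conics, so $\cC\cap K^{-1}(\cC)$ contains only two ordinary points. Now $P$ is one of them (because $K(P)=Q'\in\cC$), and $\iota(P)$ is another (because, by I--III, both $\iota(P)$ and its complement $K(\iota(P))$ lie on $\cC_P=\cC$); hence the two ordinary points of $\cC\cap K^{-1}(\cC)$ are exactly $P$ and $\iota(P)$. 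Finally $P'$ is also an ordinary point of $\cC\cap K^{-1}(\cC)$, since $P'\in\cC$ and $K(P')=Q\in\cC$ by construction, and $P'\ne P$ (otherwise $V=K^{-1}(P)=3G-2P$, which is off $GV$ because $P$ is off $GV$). Therefore $P'=\iota(P)$. The hard part is precisely this last separation of $P'$ from $\iota(P)$: a priori they are indistinguishable ordinary points of $\cC$ whose complements land back on $\cC$, and it is the dilation remark that collapses the number of such ordinary points to two and removes the ambiguity. I expect the only remaining fuss to be routine non-degeneracy bookkeeping — that $\cC$ is a genuine conic, that $P\ne\iota(P)$ so the two ordinary points are distinct, and that the five points in the conic step are distinct — all of which should follow from the standing hypotheses that $P,P'$ avoid the sides and medians of $ABC$ and $K^{-1}(ABC)$ and that the vertices avoid $P,P',Q,Q'$.
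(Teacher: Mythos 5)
Your proposal is correct in substance but takes a genuinely different route from the paper's on both parts. For (a), the paper fixes $Z$, constructs the conic with center $Z$ through three of the four points ($P,Q,P'$, or $P,Q,Q'$ in the case $Z\in PP'$), and then uses the polarity of $V_\infty$ together with a midpoint argument to force the fourth point onto it; you instead invoke the nine-point conic of the quadrangle $PP'QQ'$ and observe that it contains the three collinear points $V$, $K(V)$, $K^2(V)$, hence degenerates and contains the whole line $GV$. Your degeneration argument is slicker and explains \emph{why} every center on $GV$ occurs (the locus is exactly $GV$ together with the common midline of the parallel sides $PP'$ and $QQ'$), at the cost of having to argue separately that the member of the pencil with center $Z$ is unique and nondegenerate; note that at the single point $Z=K^3(V)$, where $GV$ meets that midline, the member is the degenerate pair $PP'\cup QQ'$ and part (a) actually fails --- an edge case the paper's proof also silently passes over, and which is harmless for the application since there $\frac{ZG}{GV}=\frac{5}{4}$. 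For (b), your identification $\cC=\cC_P$ is the paper's same five-point argument, but for $P'=\iota(P)$ the paper cites the characterization of $\iota(P)$ as the second intersection of $\cC_P$ with the line through $P$ toward the pole $V_\infty$ of $GZ$ (II, p.~26), whereas you count the ordinary points of $\cC\cap K^{-1}(\cC)$ via B\'ezout and the shared points at infinity. That counting device is precisely the one the paper itself uses later, in the proof of Theorem \ref{thm:GVZ}, to show $P$ and $P'$ are the only points $R$ of $\cC$ with $K(R)\in\cC$; so your version of (b) is a legitimate, essentially self-contained alternative. One piece of bookkeeping you should not defer to ``standing hypotheses'': the lemma does not assume $P$ avoids the medians of $ABC$, so you must derive it --- as the paper implicitly does --- from the fact that otherwise $A$, $P$, $Q'$ would be three collinear points of the nondegenerate conic $\cC$.
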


\begin{proof}
For (a), first note that $PQ'$, $P'Q'$, and $PQ$ do not lie on $Z$, since $PQ' \cdot GZ = G$ and $P'Q' \cdot GZ = PQ \cdot GZ = V$, which are distinct from $Z$ by assumption.  Suppose that $Z$ is not on $PP'$.  The point $G$, being $2/3$ of the way from $P$ to the midpoint $Q'$ of $VP'$, is the centroid of triangle $PVP'$.  Hence, $K(P')=Q$ and $VG=GZ$ is a median of triangle $PVP'$, implying that $GZ$ intersects $PP'$ at the midpoint of segment $PP'$.  Also, $QQ' \pa PP'$, so $V_\infty=PP' \cdot QQ'$ is on the line at infinity. \smallskip

Now let $\cC$ be the conic with center $Z$, lying on the points $P, Q, P'$.  This exists and is unique, since $Z$ does not lie on $PQ, P'Q$ or $PP'$.  With respect to this conic, $Z$ is conjugate to $V_\infty$, and so is the midpoint $GZ \cdot PP'$, since $P$ and $P'$ lie on $\cC$.  Since $Z$ is not on $PP'$, $GZ$ is the polar of $V_\infty$.  Now, $V_\infty$ lies on $QQ'$, so the point $Q_m = GZ \cdot QQ'$, which is the midpoint of segment $QQ'$, is conjugate to $V_\infty$.  Let $Q^*$ be the second intersection of $QQ'$ with $C$.  Note that $Q^* \neq Q$; otherwise $V_\infty$ would be conjugate to $Q$, so $Q$ would lie on its polar $GZ$, implying that $P$ also lies on $GZ$, which is contrary to assumption.  Hence, $V_\infty$ is conjugate to the midpoint of $QQ^*$, which must be $Q_m$.  This implies that $Q^*=Q'$, so $Q'$ lies on $\cC$. \smallskip

Now suppose $Z$ is on $PP'$. Then $Z$ is not on $QQ'$, since $QQ' \pa PP'$, so there is a unique conic $\cC$ through $P, Q, Q'$ with center $Z$. As above, the pole of $GZ$ is $V_\infty$, and switching the point pairs $P,P'$ and $Q, Q'$ in the argument above gives that $P'$ lies on $\cC$. \smallskip 

For (b), the triangle $ABC$ determines the conic $\cC = \cC_P = ABCPQ'$, since $P$ and $Q'$ cannot lie on any of the sides of $ABC$ and $P$ does not lie on a median of $ABC$ (see the proof of II, Theorem 2.1). We know that this conic has center $Z$, since $ABC$ is inscribed in $\cC$.  Furthermore, the pole of $GZ$ with respect to $\cC$ is $V_\infty = l_\infty \cdot PP'$, as above. But the isotomic conjugate $P^*$ of $P$ with respect to $ABC$ is the unique point $P^* \neq P$ on $PV_\infty$ lying on the conic $\cC_P$ (see II, p. 26), so that means $P^* = P'$.

\end{proof}

\begin{figure}
\[\includegraphics[width=5.5in]{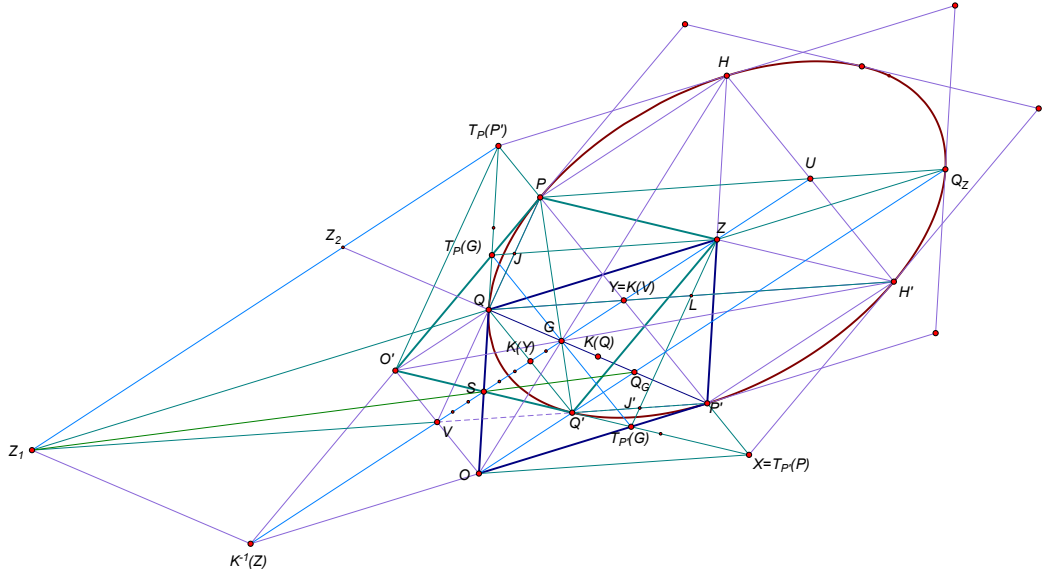}\]
\caption{Proof of Lemma 3.5.}
\label{fig:3.2}
\end{figure}

\begin{lem}
With the assumptions of Lemma \ref{lem:CwithG}, suppose the signed ratio $\frac{ZG}{GV} = \frac{5}{4}$. Then the tangent to the conic $\cC$ at $Q$ is $K(P'Z)=QK(Z)$.
\label{lem:TanParP'Z}
\end{lem}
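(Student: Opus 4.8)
The plan is to avoid computing the tangent at $Q$ directly and instead to determine the \emph{pole} of the chord $QQ'$ with respect to $\cC$, showing it equals $S:=K(Z)$. Since the pole of a secant is the intersection of the tangents at its two endpoints, and $Q=K(P')$ while $S=K(Z)$, this identifies the tangent at $Q$ with the line $QS = QK(Z) = K(P'Z)$, and by symmetry simultaneously yields the tangent at $Q'$.

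First I would record the incidences forced by the construction. Since $P'$ is the reflection of $V$ in $Q'$, the point $Q'$ is the midpoint of $VP'$; and $Q$ is the midpoint of $VP$ by hypothesis; hence $V$ lies on both lines $PQ$ and $P'Q'$. Since $Q'=K(P)$ and (by Lemma~\ref{lem:CwithG}) $Q=K(P')$, the centroid $G$, the center of $K$, lies on both $PQ'$ and $P'Q$. Finally $PP'\pa QQ'$. Thus for the quadrangle $PQP'Q'$ inscribed in $\cC$ the three diagonal points are $PQ\cd P'Q'=V$, $PQ'\cd P'Q=G$, and $PP'\cd QQ'=V_\infty$, the infinite point of the common direction of $PP'\pa QQ'$. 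The diagonal triangle $V\,G\,V_\infty$ is self-polar with respect to $\cC$. In particular the polar of $V$ is the line $GV_\infty$, so $V$ and $G$ are conjugate points; and the polar of $V_\infty$ is the line $VG=GZV$, confirming that $GZV$ is the diameter conjugate to the direction $PP'$ and that every point of $GZV$ has polar parallel to $PP'\pa QQ'$.

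Next I would invoke the conjugate-point involution on the diameter $GZV$. Because the center $Z$ is paired with the infinite point of the line, this involution is the central involution $X\mapsto X^*$ with $\overline{ZX}\cd\overline{ZX^*}=\kappa$ constant, and the pair $V\leftrightarrow G$ found above gives $\kappa=\overline{ZV}\cd\overline{ZG}$. Placing $Z$ at the origin of the line, the two metric inputs are: $S=K(Z)$ gives $\overline{ZS}=\tfrac32\,\overline{ZG}$ (homothety with center $G$, ratio $-\tfrac12$), and the hypothesis $\tfrac{ZG}{GV}=\tfrac54$ gives $\overline{ZV}=\tfrac95\,\overline{ZG}$. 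A short computation then gives $\overline{ZS^*}=\kappa/\overline{ZS}=\tfrac65\,\overline{ZG}$, which is exactly the coordinate of the midpoint $M$ of $QQ'$; indeed $M=\tfrac14(3G+V)$ since $G$ is the centroid of $PVP'$, so $\overline{ZM}=\tfrac14(3+\tfrac95)\,\overline{ZG}=\tfrac65\,\overline{ZG}$. Hence $S^*=M$.

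Finally, the polar of $S$ is parallel to $QQ'$ (second step) and passes through $S^*=M$, the midpoint of $QQ'$; therefore it is the line $QQ'$ itself, i.e. $S$ is the pole of $QQ'$. Consequently the tangents at $Q$ and $Q'$ meet at $S=K(Z)$, so the tangent at $Q$ is $QK(Z)=K(P'Z)$, as claimed. I expect the main obstacle to be the bookkeeping that pins $S^*$ to the midpoint of $QQ'$: this is precisely where the hypothesis $ZG/GV=\tfrac54$ is essential (any other ratio fails), so one must confirm the involution relation $\overline{ZX}\cd\overline{ZX^*}=\kappa$ and check that the excluded degeneracies hold, namely that $P\notin GZ$ forces $V_\infty$ off the line $GZV$ (so $V\,G\,V_\infty$ is a genuine triangle) and that $Q\neq Q'$, $G\neq V$, $Z\neq G$, making the self-polar triangle and the involution non-degenerate. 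A direct alternative is to take coordinates with $Z$ at the origin and $GZV$ an axis, solve the three linear conditions for the quadratic form of $\cC$ (the leading coefficient comes out constant and the mixed term is pinned down), and verify that $K(Z)$ satisfies the polar equation of $Q$; there too the value $\tfrac54$ is exactly what closes the verification.
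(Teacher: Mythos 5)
Your proof is correct, but it reaches the conclusion by a genuinely different and considerably shorter route than the paper. Both arguments ultimately show that $S=K(Z)$ is the pole of $QQ'$ by exhibiting $S$ as conjugate both to $V_\infty$ (via the self-polar diagonal triangle $GVV_\infty$ of the inscribed quadrangle $PP'QQ'$, which forces the polar of $S$ to be parallel to $QQ'$) and to the midpoint of $QQ'$. The difference lies entirely in how that second conjugacy is obtained. The paper constructs the conjugate-point involution on the line $GZ$ synthetically, via a chain of three perspectivities centered at $Q$, $Z_\infty$, and $Z_1$, which requires introducing about a dozen auxiliary points ($Y, U, H', Q_G, Q_Z, Z_1, J', L, Z^*, Z_2$) and a sequence of congruent- and similar-triangle arguments to verify each perspectivity. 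You instead observe that on a diameter the conjugate-point involution pairs the center with the point at infinity, hence has the metric form $\overline{ZX}\cdot\overline{ZX^*}=\kappa$, with $\kappa=\overline{ZV}\cdot\overline{ZG}$ read off from the conjugate pair $(G,V)$; the hypothesis $ZG/GV=5/4$ then places the conjugate of $S$ exactly at $\tfrac{6}{5}\,\overline{ZG}$, which is the midpoint $\tfrac{1}{4}(3G+V)$ of $QQ'$ (a point of $GZ$, since $G$ is the centroid of $PVP'$). I checked the arithmetic: $\overline{ZS}=\tfrac32\overline{ZG}$, $\overline{ZV}=\tfrac95\overline{ZG}$, $\kappa/\overline{ZS}=\tfrac65\overline{ZG}$ all come out right, and your closing remarks correctly flag the needed non-degeneracies (one might add explicitly that $S\neq Q$, since $Q\notin GZ$ while $S\in GZ$, so the tangent at $Q$ really is the line $QS$). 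What the paper's version buys is a purely incidence-theoretic construction of the involution, in keeping with the synthetic program of the series; what yours buys is brevity and a transparent view of exactly where the ratio $5/4$ is used.
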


\begin{proof}
(See Figure \ref{fig:3.2}.)  As in the proof of Proposition \ref{prop:tangents}, in triangle $PP'V$, the midpoint of $PP'$ is the complement $Y=K(V)$ of $V$, and so the midpoint of $QQ'$ is the complement $K(Y)$ of $Y$. Let $U = K^{-1}(V)$. Then $GV = 2 GY = 4GK(Y)$ and $ZG = 5GK(Y)$, so $ZY = ZG - GY = 3GK(Y)$. In addition, $UG = 8GK(Y)$ so $UZ = UG - ZG = 3GK(Y)=ZY$; hence, $Z$ is the midpoint of $YU$. Let $H'$ be the reflection of $P$ in $Z$. Then triangles $H'UZ$ and $PYZ$ are congruent so $H'U \pa PY = P'Y$. Also, triangles $PYZ$ and $PP'H'$ are similar (SAS), so $P'H' \pa YZ=UY$. This implies $P'H'UY$ is a parallelogram and $P'H' = UY = VY$. Hence $P'H'YV$ is also a parallelogram and $H'Y \pa P'V$. Since triangles $QYP$ and $VP'P$ are similar (with similarity ratio $1/2$), $QY \pa P'V$, so $Y$ lies on $H'Q$. \smallskip  

Let $Q_G$ and $Q_Z$ be the reflections of $Q$ in $G$ and $Z$, respectively. Then $QQ_G Q_Z \sim QGZ$, so the line $Q_GQ_Z$ is parallel to $GZ$ and lies halfway between $P'$ and $GZ$.  Hence, $Q'$ lies on $Q_GQ_Z$.  Also, $U$ lies on $PQ_Z$, since $P, U, Q_Z$ are the reflections of $H', Y, Q$ in $Z$, and we proved $H', Y, Q$ are collinear in the previous paragraph. Moreover, $PQ_Z \pa H'Q \pa P'V$, since $P'H'YV$ is a parallelogram. \smallskip

Let $Z_1$ be the reflection of $Q_Z$ in $Q$. Then $Z_1QV \cong Q_ZQP$ (SAS), so $Z_1V \pa PQ_Z \pa P'V$, hence $Z_1$ lies on $P'V$.  Now let $J'$ be the midpoint of $P'Q'$ and $L$ the midpoint of $QH'$.   Then $ZL$ is a midline in triangle $PQH'$, so $ZL \pa PQ = QV$.  Since $P'Q' \pa H'Q = QL$, and $ZL$ lies in the conjugate direction to $QH'$, it follows that $Z, L, J'$  are collinear.  Hence, $Z_1QV \sim Z_1ZJ'$, which implies, since $Z_1Q= 2 QZ$, that $Z_1V=2VJ'= 2 \cdot (\frac{3}{4} VP') = \frac{3}{2}VP'$.  Let $Z^* = K^{-1}(Z)$. Then $Z^*G = 2\cdot GZ = \frac{5}{2} \cdot VG$, by hypothesis, so $Z^*V = \frac{3}{2} \cdot VG$.  Hence, triangles $Z_1Z^*V$ and $P'GV$ are similar (SAS) and $Z_1Z^* \pa P'G$. Let $S = Z_1Q_G \cdot GZ$.  Since $Z_1Z^* \pa Q_GG$, we have similar triangles $Z_1Z^*S$ and $Q_GGS$.  Moreover, $Z_1ZS \sim Z_1Q_ZQ_G$, since $SZ=GZ \pa Q_GQ_Z$, with $Z_1Z=3 ZQ_Z$, so $Z_1S=3SQ_G$.  It follows that $Z^*S=3SG$ and therefore $Z^*G=4SG=2GZ$.  This implies $S = K(Z)$. \smallskip

Let $Z_2$ be the intersection of $P'G$ with the line through $Z_1$ parallel to $GZ$. Also, let $Z_\infty$ be the point at infinity on $GZ$. Then we have the following chain of perspectivities:
\[GVZK(Y) \ \stackrel{Q}{\doublebarwedge} \ P'VZ_1Q' \ \stackrel{Z_\infty}{\doublebarwedge} \ P'GZ_2Q_G \ \stackrel{Z_1}{\doublebarwedge} \ VGZ_\infty S.\]
The resulting projectivity on the line $GZ$ is precisely the involution of conjugate points on $GZ$ with respect to $\cC$, because $G$ and $V$ are conjugate points (they are vertices of the diagonal triangle $GVV_\infty$ of the inscribed quadrangle $PP'QQ'$) and the polar of $Z$ is the line at infinity, which intersects $GZ$ in $Z_\infty$. This gives that $K(Y)$ is conjugate to $S$.  But $S$ is also conjugate to $V_\infty$, since $S$ lies on its polar $GV$.  This implies that the polar of $S$ is $K(Y)V_\infty=QQ'$.  Thus, the tangent to $\cC$ at $Q$ is $QS = K(P'Z)$.
\end{proof}

\begin{prop}
\label{prop:ABCMht}
Under the assumptions of Lemmas \ref{lem:CwithG} and \ref{lem:TanParP'Z}, for any triangle $ABC$ with vertices on $\cC$ whose centroid is $G$, and whose vertices do not coincide with any of the points $P, P', Q$ or $Q'$, the map $\textsf{M}=T_P \circ K^{-1} \circ T_{P'}$ is a half-turn.
\end{prop}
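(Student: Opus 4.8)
The plan is to reverse the analysis of Section 2: I will show that the circumcenter $O$ of the chosen triangle is positioned so that $QZP'O$ is a parallelogram, and then conclude by Corollary \ref{cor:ZS} that $\textsf{M}$ is a half-turn. The delicate point throughout is that I am arguing in the converse direction, so every incidence and parallelism I invoke must be \emph{intrinsic}—valid for an arbitrary triangle inscribed in $\cC$ with centroid $G$—and must not secretly presuppose that $\textsf{M}$ is already a half-turn. The one input that carries the special hypothesis $\frac{ZG}{GV}=\frac54$ is the tangent computation of Lemma \ref{lem:TanParP'Z}.

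First I would record the identifications from Lemma \ref{lem:CwithG}(b): for the given triangle, $\cC=\cC_P$ has center $Z$, while $P'=\iota(P)$, $Q=K(P')$, and $Q'=K(P)$. Consequently $K(Q)$ is both the midpoint of $P'Q$ and the center of $\N_{P'}$, and since $\cC_P$ is a conic on $A,B,C,P'$ its center $Z$ lies on $\N_{P'}$.

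Next come the two parallelisms that close the parallelogram. By IV, Prop. 3.10 the line $OQ$ is tangent to $\cC_P$ at $Q$, and by Lemma \ref{lem:TanParP'Z} this tangent is $QK(Z)$; hence $O$, $Q$, $K(Z)$ are collinear. Applying $K^{-1}$ (which sends $O\mapsto H$, $Q\mapsto P'$, $K(Z)\mapsto Z$) places $Z$ on $P'H=K^{-1}(OQ)$, so that $ZP'\pa OQ$. For the other pair of sides I would use the intrinsic incidence $K^{-1}(Z)\in OP'$ of III, Corollary 3.13(b); applying $K$ gives $Z\in QN\pa OP'$ with $N=K(O)$, so that $QZ\pa OP'$. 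With both pairs of opposite sides parallel, $QZP'O$ is a parallelogram (its nondegeneracy, i.e. that $O,P',Q$ are not collinear, following from the standing hypotheses on the triangle and the special points). Corollary \ref{cor:ZS} then yields at once that $\textsf{M}$ is a half-turn. (Alternatively, the parallelogram forces $O=2K(Q)-Z$, the reflection of $Z\in\N_{P'}$ in the center $K(Q)$ of $\N_{P'}$, so $O\in\N_{P'}$ is condition (7), and Proposition \ref{prop:half-turn} applies; one may also verify directly that the center $OQ\cdot GV$ of $\textsf{M}$ equals $K(Z)$, giving $K^{-1}(S)=Z$.)

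The main obstacle I anticipate is not a single hard computation but keeping the logic honest: I must be certain that the tangency ``$OQ$ tangent at $Q$'' (IV, Prop. 3.10) and the incidence ``$K^{-1}(Z)\in OP'$'' (III, Corollary 3.13(b)) hold for every admissible triangle without the half-turn assumption, so that Lemma \ref{lem:TanParP'Z}—and through it the ratio $\frac{ZG}{GV}=\frac54$—is the sole ingredient forcing the parallelogram to close. A secondary matter requiring care is confirming that the hypotheses of Proposition \ref{prop:half-turn} and Corollary \ref{cor:ZS} ($P,P'$ off the sides and medians of $ABC$ and $K^{-1}(ABC)$, and $H$ not a vertex) are inherited from the assumptions on $ABC$, $G$, and the points $P,P',Q,Q'$; the median case, were it to arise, is already excluded by Lemma \ref{lem:median}.
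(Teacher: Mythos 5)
Your proposal is correct and rests on the same two ingredients as the paper's proof---Lemma \ref{lem:TanParP'Z} identifying the tangent at $Q$ with $QK(Z)$, and IV, Prop.\ 3.10 identifying it with $OQ$---followed by an appeal to Corollary \ref{cor:ZS}. The paper simply takes the shortcut you mention parenthetically: from the collinearity of $O$, $Q$, $K(Z)$ it reads off $S=OQ\cdot GZ=K(Z)$ directly, without first assembling the parallelogram via III, Corollary 3.13(b).
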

\begin{proof}
By Lemmas \ref{lem:CwithG} and \ref{lem:TanParP'Z}, the tangent at $Q$ to $\cC_P$ goes through $K(Z)$. But the tangent at $Q$ is $OQ$ (IV, Prop. 3.10), hence the generalized insimilicenter $S$ for $ABC$ is $S = OQ \cdot GZ = K(Z)$, where $Z$ is the center of $\cC_P=\cC$. Now the proposition follows from Corollary \ref{cor:ZS}.
\end{proof}

\begin{thm}
\label{thm:GVZ}
Let $G, V, Z$ be any distinct, collinear, and ordinary points with signed ratio $\frac{ZG}{GV} = \frac{5}{4}$, and $P$ an ordinary point not on $GZ$.  Define $Q'=K(P)$ (complement taken with respect to $G$) and let $P'$ be the reflection of $V$ in $Q'$ and $Q$ the midpoint of $PV$.  Finally let $\cC$ be the conic guaranteed by Lemma \ref{lem:CwithG}(a).  For any point $A$ on the arc $\mathscr{A}=PQQ'P'$ of $\cC$ distinct from these four points, there is a unique pair of points $\{B,C\}$ on $\cC$ (and then on the same arc), such that $ABC$ has centroid $G$.  For each such triangle, the map $\textsf{M}$ is a half-turn, and this map is independent of $A$.  Conversely, if $ABC$ is inscribed in the conic $\cC$ with centroid $G$, then $A \in \mathscr{A}-\{P,Q,Q',P'\}$. 
\end{thm}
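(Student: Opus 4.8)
The plan is to reduce the entire statement to one fact about chords of $\cC$. In a triangle with centroid $G$ the midpoint of the side $BC$ is the complement $K(A)$ of the opposite vertex, so a triangle $ABC$ inscribed in $\cC$ has centroid $G$ if and only if, for each vertex, the other two vertices are the endpoints of the chord of $\cC$ whose midpoint is the complement of that vertex. Hence, fixing $A\in\cC$, a pair $\{B,C\}$ of points of $\cC$ with $A+B+C=3G$ is the same thing as a chord of $\cC$ with midpoint $K(A)$: it exists exactly when $K(A)$ is an interior point of $\cC$, and then it is unique, since a conic has at most one chord with a prescribed midpoint. Existence and uniqueness of $\{B,C\}$ thus reduce to deciding for which $A$ the complement $K(A)$ is an interior chord-midpoint.

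First I would pin down the conic. Placing $G$ at the origin with $GZ$ along a coordinate axis, so that $Q'=-\tfrac12 P$, $Q=\tfrac12(P+V)$, and $P'=-P-V$, a direct computation using the hypothesis $\tfrac{ZG}{GV}=\tfrac54$ shows that $\cC$ is always an ellipse and that $G$ lies in its interior; concretely, the two intersection points of the diameter $GZ$ with $\cC$ straddle $G$, because $|GZ|=\tfrac54|GV|$ is shorter than the corresponding semidiameter. This ellipse-with-interior-$G$ fact is exactly what makes the arc $\mathscr{A}$ meaningful, and it is the one place where the precise ratio $5/4$ enters.

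Next I would locate the arc. Since $K$ is the homothety about $G$ of ratio $-\tfrac12$, the conics $\cC$ and $K^{-1}(\cC)$ are homothetic and so share their (complex) points at infinity; by B\'ezout they therefore meet in at most two finite points, and since $K(P)=Q'$ and $K(P')=Q$ both lie on $\cC$ these points are exactly $P$ and $P'$. Thus $K(A)\in\cC$ only for $A\in\{P,P'\}$, so whether $K(A)$ is interior has a constant answer on each of the two open arcs of $\cC$ cut off by $P$ and $P'$. On the arc through $Q,Q'$ the answer is yes, because $K(Q)=\tfrac12(Q+P')$ and $K(Q')=\tfrac12(Q'+P)$ are the midpoints of the genuine chords $QP'$ and $Q'P$, hence interior; this identifies $\mathscr{A}=PQQ'P'$ as the arc of admissible vertices, with endpoints $P,P'$ the only boundary cases (there $K(A)$ lies on $\cC$ and the chord collapses, $B=C$). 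I would then handle the remaining special points: the chord with midpoint $K(A)$ passes through $A$ precisely when the reflection $K^{-1}(A)$ of $A$ in $K(A)$ lies on $\cC$, i.e. when $A\in\cC\cap K(\cC)=\{Q,Q'\}$ (again two homothetic conics, with $K(P)=Q'$, $K(P')=Q$ exhibiting the two common points), so at $A=Q,Q'$ the triangle degenerates. For every other $A\in\mathscr{A}$ the points $B,C$ are distinct, different from $A$, and---since $K(B)$, $K(C)$ are the midpoints of the sides $CA$, $AB$---are themselves interior chord-midpoints, so $B,C$ again lie on $\mathscr{A}\setminus\{P,P',Q,Q'\}$. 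The same midpoint remark yields the converse directly: every vertex of an inscribed triangle with centroid $G$ has complement equal to the midpoint of the opposite genuine side, hence interior, so the vertex lies on the open arc and, by the degeneracy analysis, avoids $P,Q,Q',P'$.

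Finally, that $\textsf{M}$ is a half-turn for each of these triangles is immediate from Proposition \ref{prop:ABCMht}, whose hypotheses---vertices on $\cC$, centroid $G$, and vertices avoiding $P,P',Q,Q'$---have just been verified; and the independence of $A$ follows because that proposition identifies the center of $\textsf{M}$ as $S=K(Z)$, which depends only on the fixed points $G,Z$ and not on $A$, so all these half-turns coincide. I expect the main obstacle to be the arc bookkeeping of the third step: establishing that $\cC$ is an ellipse with $G$ interior, and then controlling exactly which $A$ yield a nondegenerate triangle whose other two vertices return to the same arc. The algebraic reduction of the first step and the appeal to Proposition \ref{prop:ABCMht} are, by comparison, routine.
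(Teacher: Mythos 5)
Your proposal is correct and, for the heart of the argument, takes the same route as the paper: the reduction of ``centroid $G$'' to ``$K(A)$ is an interior chord-midpoint,'' the identification of $\cC\cap K^{-1}(\cC)=\{P,P'\}$ via the shared points at infinity (the paper phrases this as $K(\cC)$ inducing the same involution on $l_\infty$ as $\cC$), the constancy of interiority on the two open arcs cut off by $P,P'$ together with the check at $Q$, the degeneracy analysis at $Q,Q'$, and the appeal to Proposition \ref{prop:ABCMht} are all exactly the paper's steps; your ``unique chord with prescribed midpoint'' is equivalent to the paper's intersection of $\cC$ with its reflection in $D_0=K(A)$. Where you genuinely diverge is the preliminary fact that $\cC$ is an ellipse: you defer this to an unexecuted coordinate computation, whereas the paper proves it synthetically by first realizing the hypotheses on a circle (starting from a square $QZP'O$, which forces $\frac{ZG}{GV}=\frac{5}{4}$) and then carrying any instance of the data onto that model by an affine map. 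That circle model is not merely a verification device --- it is reused immediately after the theorem to construct the locus $\mathscr{L}$ and again in the final theorem --- so your shortcut, though workable, discards something the paper exploits later. Two small points to tighten: a conic has more than one chord with a prescribed midpoint when that midpoint is the center, so your uniqueness claim (and equally the paper's ``exactly two points in common'') tacitly requires $K^{-1}(Z)\notin\cC$, which is true and easiest to see in the circle model; and your heuristic that $G$ lies between the two intersections of $GZ$ with $\cC$ only certifies an interior point after you already know $\cC$ is an ellipse, so that sub-claim cannot be used to establish the ellipse fact itself.
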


\begin{proof}
We start by showing that the hypotheses of the theorem can be satisfied for suitable points $G,V,Z,P$, for which the conic $\cC$ is a circle.  Start with a circle $\cC$ with center $Z$.  Pick points $P', Q$ on $\cC$ and $O$ so that $QZP'O$ is a square.  Let $S$ be the midpoint of $OQ$ and $G=SZ \cdot QP'$. Reflect $P'$ and $Q$ in $GZ$ to obtain the points $P$ and $Q'$ on $\cC$.  Let $Y$ on $GZ$ be the midpoint of $PP'$.  Also, let $V =PQ \cdot P'Q'$ on $GZ$.  Since $G=QP' \cdot SZ$ is on the bisector of $\angle SQZ$ and $\frac{ZQ}{QS}=\frac{2}{1}$, we have $\frac{ZG}{GS}=\frac{2}{1}$, so $K(Z)=S$.  This implies that $K(ZP')=SK(P')$ is parallel to $ZP'$ and half the length, so $K(P')=Q$.  In the same way, $K(P)=Q'$.  Then in triangle $PVQ'$ the segment $VG$ bisects the angle at $V$, so $\frac{PV}{VQ'}=\frac{PG}{GQ'}=\frac{P'G}{GQ}=\frac{2}{1}$.  Thus, $VP' = VP = 2 \cdot VQ'$.  It follows that $Q'$ is the midpoint of $VP'$, $Q$ is the midpoint of $VP$, and $G$ is the centroid of $VPP'$.  Hence, $Y=K(V)$.  This shows that the hypotheses of Lemma \ref{lem:CwithG} hold, so $\cC$ is the conic of that lemma.  By the same argument as in the proof of Proposition \ref{prop:tangents}(2), using the fact that $OQ$ and $OP'$ are tangent to $\cC$ at $Q$ and $P'$, respectively, and that $GVV_\infty$ is a self-polar triangle with respect to $\cC$, it follows that $K^{-1}(PP')=OO'$ and $V$ is the midpoint of $OO'$.  Now, letting $\textsf{M}$ be the half-turn about $S$, the same argument as in the proof of Proposition \ref{prop:tangents}(3) gives that $\frac{ZG}{GV}=\frac{5}{4}$.  \smallskip

If $G, V, Z, P$ are any points satisfying the hypotheses, then there is an affine map taking triangle $VPP'$ to the corresponding triangle constructed in the previous paragraph, so that $G$ ( the centroid of $VPP'$) and $Z$ go to the similarly named points and the trapezoid $PP'Q'Q$ is mapped to the corresponding trapezoid for the circle.  Then the image of the new conic $\cC$ is the circle of the previous paragraph, so $\cC$ must be an ellipse.  \smallskip

Given $A$ on the arc $\mathscr{A}=PQQ'P'$ of $\cC$, define $D_0=K(A)$.  Now $P'$ and $K(P')=Q$ are on $\cC$, as are $P$ and $K(P)=Q'$.  We claim that $P$ and $P'$ are the only two points $R$ on $\cC$ for which $K(R)$ is also on $\cC$.  This is because $K(\cC)$ is a conic with center $K(Z)=S$, meeting $\cC$ at $Q, Q'$, and lying on the point $K(Q)$.  Note that the map $K$ fixes all points on $l_\infty$, so $K(\cC)$ induces the same involution on $l_\infty$ that $\cC$ does.  It follows that there are exactly two points in $K(\cC) \cap \cC$.  Since the point $Q$ is on the given arc $\mathscr{A}$ and $K(Q)$, as the midpoint of segment $QP'$, is interior to $\cC$, it follows that the same is true for the point $D_0=K(A)$, for any $A$ on $\mathscr{A}$, while $D_0$ lies outside of $\cC$ when $A$ is on $\cC-\mathscr{A}$.  Now consider the reflection $\cC'$ of the conic $\cC$ in the point $D_0$.  When $D_0$ lies inside $\cC$, it also lies inside $\cC'$, and therefore the two conics $\cC, \cC'$ overlap.  Since reflection in $D_0$ fixes all the points on $l_\infty$, the conic $\cC'$ induces the same involution on $l_\infty$ that $\cC$ does.  Therefore, they have exactly two points in common.  Labeling these points as $B$ and $C$, it is clear that $D_0$ is the midpoint of segment $BC$, and from this and $K(A)=D_0$ it follows that $G$ is the centroid of $ABC$.  On the other hand, if $A$ lies outside of $\mathscr{A}$, then $D_0$ lies outside of $\cC$, and in this case, $\cC'$ does not intersect $\cC$ (so there can be no triangle inscribed in $\cC$ with centroid $G$).  Applying the same argument to the points $B$ and $C$ instead of $A$ shows that $B$ and $C$ are also on the arc $\mathscr{A}$.  The next to last assertion follows from Proposition \ref{prop:ABCMht} and the comments preceding Lemma \ref{lem:CwithG}.
\end{proof}

\begin{figure}
\[\includegraphics[width=4.5in]{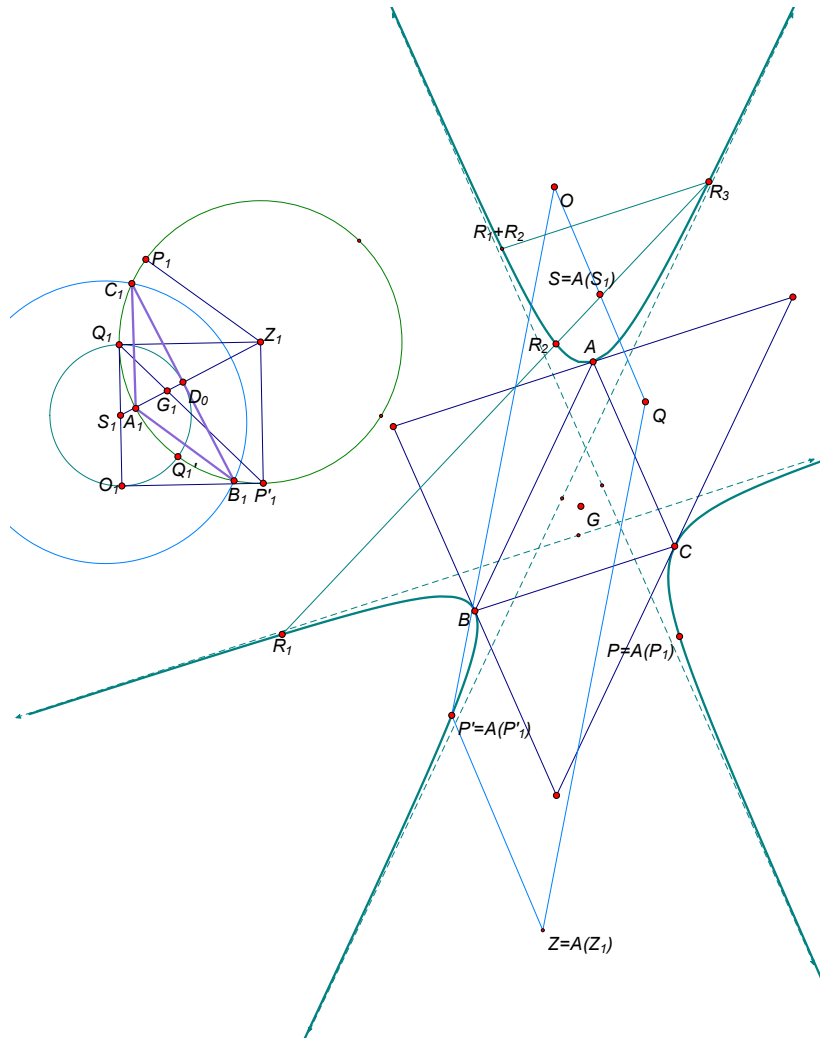}\]
\caption{Elliptic curve locus of $P$ with $\textsf{M}$ a half-turn.}
\label{fig:3.3}
\end{figure}

We can use the proof of Theorem \ref{thm:GVZ} to give a construction of the locus $\mathscr{L}$ of points $P$, for a given triangle $ABC$, for which the map $\textsf{M}$ is a half-turn.  To do this, start with the construction of the points $Q_1Z_1P_1'O_1$ on circle $\cC$, as in the first paragraph of the proof.  Pick a point $A_1$ on the arc $\mathscr{A}=P_1Q_1Q_1'P_1'$, and determine the unique pair of points $B_1, C_1$ on $\mathscr{A}$ so that the centroid of $A_1B_1C_1$ is the point $G_1=Z_1S_1 \cdot Q_1P_1'$, with $S_1$ the midpoint of segment $Q_1O_1$.  Then determine the unique affine map $\textsf{A}$ for which $\textsf{A}(A_1B_1C_1)=ABC$.  The points $P=\textsf{A}(P_1)$ and $P'=\textsf{A}(P_1')$ describe the locus $\mathscr{L}$, as $\textsf{A}$ runs over all affine maps with $A_1 \in \mathscr{A}$.  This locus is shown in Figure \ref{fig:3.3}, and turns out to be an elliptic curve $\mathcal{E}$ minus $6$ points, as we show below.  For the pictured triangle $A_1B_1C_1$ and its half-turn $\textsf{M}_1$, the map $\textsf{M}= \textsf{A} \circ \textsf{M}_1 \circ \textsf{A}^{-1}$ is a half-turn about the point $S=\textsf{A}(S_1)$.  Note that $\mathcal{E}$ is tangent to the sides of the anticomplementary triangle $K^{-1}(ABC)$ of $ABC$ at the vertices.  \medskip

An equation for the curve $\mathcal{E}$ can be found using barycentric coordinates.  It can be shown (see \cite{mm0}, eqs. (8.1) and (3.4)) that homogeneous barycentric coordinates of the points $S$ and $Z$ are
$$S=(x(y+z)^2,y(x+z)^2,z(x+y)^2), \ \ Z=(x(y-z)^2,y(z-x)^2,z(x-y)^2),$$
where $P=(x,y,z)$.  Using the remark after Corollary \ref{cor:ZS}, we compute that the points $P=(x,y,z)$, for which $\textsf{M}$ is a half-turn, satisfy $S=K(Z)$, so the coordinates of $P$ satisfy the equation
$$\mathcal{E}: \ \ x^2(y+z)+y^2(x+z)+z^2(x+y)-2xyz=0.$$
Note that $P \in \mathcal{E} \Rightarrow P' \in \mathcal{E}$.  Setting $z=1-x-y$, where $(x,y,z)$ are absolute barycentric coordinates, we get the affine equation for $\mathcal{E}$:
\begin{equation}
(5x-1)y^2+(5x-1)(x-1)y-x^2+x =0.
\label{eqn:2.2}
\end{equation}
This is the case $a=-5$ of the geometric normal form
$$(ax+1)y^2+(ax+1)(x-1)y+x^2-x=0$$
that we mentioned in \cite{mmv}.  Rational points on $\mathcal{E}$ are $(x,y,z) = (1,0,0)$, $(0,1,0)$, $(0,0,1)$, which correspond to the vertices $A,B,C$.  The points on $l_\infty \cap \mathcal{E}$ are $(x,y,z)=(0,1,-1), (1,0,-1)$, and $(1,-1,0)$, which are the infinite points on the sides of $ABC$.  No other points on the sides or medians of $ABC$ of $K^{-1}(ABC)$ lie on the curve $\mathcal{E}$.  Using (\ref{eqn:2.2}) we can check directly that the curve $\mathcal{E}$ is tangent to $K^{-1}(ABC)$ at the points $A,B,C$ and that it has no singular points.  It follows from this that $\mathcal{E}$ is an elliptic curve, whose points form an abelian group under the addition operation given by the chord-tangent construction.  (See \cite{kna}, p. 67, or \cite{wa}.)  In Figure \ref{fig:3.3} the sum of the points $R_1$ and $R_2$ on $\mathcal{E}$ is the point $R_1+R_2$, taking the point  $A_\infty=BC \cdot l_\infty=(0,1,-1)$ as the base point (identity for the addition operation on the curve). With the base point $A_\infty$, the point $A$ has order $2$, while $B_\infty = AC \cdot l_\infty$ and $C_\infty = AB \cdot l_\infty$ have order $3$, and the points $B,C$ have order $6$.  \medskip

Note that if $P \in \iota(l_\infty)$ is a point on the Steiner circumellipse lying on $\mathcal{E}$, then $P' \in \mathcal{E} \cap l_\infty$, so that $P'$ is one of the points $A_\infty=(0,1,-1), B_\infty=(1,0,-1), C_\infty=(1,-1,0)$, whose isotomic conjugates are $A, B, C$.  Other than the vertices of $ABC$, no points on the Steiner circumellipse lie on $\mathcal{E}$.  Furthermore, the Steiner circumellipse is inscribed in the triangle $K^{-1}(ABC)$, while $\mathcal{E}$ is tangent to the sides of this triangle at $A, B, C$.  By Proposition \ref{prop:ABCMht} and Theorem \ref{thm:GVZ}, any point $P$ for which $\textsf{M}$ is a half-turn has the property that the points $Q=K(P')$ and $Q'=K(P)$ are exterior to triangle $ABC$.  It follows that $P$ and $P'$ are exterior to triangle $K^{-1}(ABC)$.  Hence, {\it all} the points of $\mathcal{E}-\{A,B,C\}$ are exterior to triangle $K^{-1}(ABC)$, as pictured in Figure \ref{fig:3.3}.  \medskip

We now check that the hypotheses of Proposition \ref{prop:half-turn} hold for all the points in $\mathcal{E}-\{A,B,C,A_\infty,B_\infty,C_\infty \}$.  By the results of \cite{mmv}, the points for which the generalized orthocenter $H$ is a vertex are contained in the union of three conics, $\overline{\C}_A \cup \overline{\C}_B \cup \overline{\C}_C$, which lie inside the Steiner circumellipse.  By what we said above, none of the points in $\mathcal{E}-\{A,B,C,A_\infty,B_\infty,C_\infty \}$ can lie on any of these conics, so $H$ is never a vertex for these points.  Hence, the hypotheses of Proposition \ref{prop:half-turn} are satisfied for any $P$ in $\mathcal{E}-\{A,B,C,A_\infty,B_\infty,C_\infty \}$ and Corollary \ref{cor:ZS} implies that the map $\textsf{M}$ for the point $P$ is a half-turn.  Thus, we have the following result.

\begin{thm}
\label{thm:curveE}
The locus of points $P$, not lying on the sides of triangles $ABC$ or $K^{-1}(ABC)$, for which $\textsf{M}=T_P \circ K^{-1} \circ T_{P'}$ is a half-turn, coincides with the set of points whose barycentric coordinates lie in $\mathcal{E}-\{A,B,C,A_\infty,B_\infty,C_\infty \}$.
\end{thm}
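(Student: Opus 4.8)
The plan is to establish that the barycentric equation $\mathcal{E}$ exactly parametrizes the half-turn locus by combining the synthetic characterization from Corollary \ref{cor:ZS} with the coordinate formulas already assembled in the paragraphs preceding the theorem. First I would recall that, by the Remark following Corollary \ref{cor:ZS}, the condition $K^{-1}(S)=Z$ (equivalently $S=K(Z)$) is \emph{necessary} for $\textsf{M}$ to be a half-turn, with \emph{no} hypothesis that $H$ avoid a vertex and no restriction to points off the medians or the Steiner circumellipse. Using the coordinate expressions $S=(x(y+z)^2,\dots)$ and $Z=(x(y-z)^2,\dots)$, one computes that $S=K(Z)$ forces $P=(x,y,z)$ to satisfy the cubic $\mathcal{E}$. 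This shows the containment: \emph{every} $P$ (off the sides of $ABC$ and $K^{-1}(ABC)$) for which $\textsf{M}$ is a half-turn has barycentric coordinates on $\mathcal{E}$, and none of the excluded points $A,B,C,A_\infty,B_\infty,C_\infty$ can arise because these correspond to $P$ on a side or at infinity on a side.

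For the reverse containment I would argue that every admissible point of $\mathcal{E}$ genuinely yields a half-turn. The key is to verify that the full hypotheses of Proposition \ref{prop:half-turn} hold for each $P\in\mathcal{E}-\{A,B,C,A_\infty,B_\infty,C_\infty\}$, so that Corollary \ref{cor:ZS} upgrades the necessary condition $S=K(Z)$ to a sufficient one. The paragraph immediately before the theorem statement already does exactly this: it cites the explicit list of curve points on sides, medians, and at infinity, invokes the result of \cite{mmv} that the locus where $H$ is a vertex lies on three conics $\overline{\C}_A\cup\overline{\C}_B\cup\overline{\C}_C$ inside the Steiner circumellipse, and notes that $\mathcal{E}$ meets the Steiner circumellipse only at $A,B,C$. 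I would assemble these observations to conclude that for any remaining point of $\mathcal{E}$, the points $P,P'$ are ordinary, off the sides and medians of both triangles, and $H$ is not a vertex; hence Proposition \ref{prop:half-turn} and Corollary \ref{cor:ZS} apply and give that $\textsf{M}$ is a half-turn.

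The main obstacle I anticipate is the careful bookkeeping of the \emph{excluded} points and the genericity hypotheses, rather than any deep new idea. One must confirm that $\mathcal{E}$ meets $l_\infty$ and the sides of $ABC$ exactly in the six listed points (so that excising them removes precisely the degenerate configurations), that $\mathcal{E}$ contains no further median points of $ABC$ or $K^{-1}(ABC)$, and that $\mathcal{E}\cap\iota(l_\infty)=\{A,B,C\}$ so the Steiner-circumellipse exclusion in Lemma \ref{lem:median}(a) and the orthocenter-vertex conics of \cite{mmv} are avoided. These are finite checks using equation (\ref{eqn:2.2}) and the cited facts, but they are exactly where an off-by-one point could slip in. Once they are in place, the two containments match, and the theorem follows: the half-turn locus equals the set of barycentric points in $\mathcal{E}-\{A,B,C,A_\infty,B_\infty,C_\infty\}$.
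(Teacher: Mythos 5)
Your proposal is correct and follows essentially the same route as the paper: necessity of $S=K(Z)$ via the Remark after Corollary \ref{cor:ZS} together with the barycentric formulas for $S$ and $Z$, and sufficiency by verifying the hypotheses of Proposition \ref{prop:half-turn} for every point of $\mathcal{E}-\{A,B,C,A_\infty,B_\infty,C_\infty\}$ and then invoking Corollary \ref{cor:ZS}. The one point worth making fully explicit is that avoiding the conics $\overline{\C}_A\cup\overline{\C}_B\cup\overline{\C}_C$ requires knowing that the points of $\mathcal{E}-\{A,B,C\}$ are \emph{exterior} to the Steiner circumellipse (which the paper derives from their exteriority to $K^{-1}(ABC)$, via Proposition \ref{prop:ABCMht} and Theorem \ref{thm:GVZ}), not merely that $\mathcal{E}$ meets that ellipse only at the vertices.
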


Every point $P$ on $\mathcal{E}-\{A,B,C,A_\infty,B_\infty,C_\infty \}$ is a point for which $QZP'O$ is a parallelogram.  This yields an affine map $\textsf{A}$, for which $\textsf{A}(Q_1Z_1P_1'O_1)=QZP'O$, and implies by the proof of Theorem \ref{thm:GVZ} that $\textsf{A}^{-1}(ABC)=A_1B_1C_1$ is a triangle with centroid $G_1$, inscribed on the arc $\mathscr{A}$.  Lemma \ref{lem:CwithG} shows that $P_1$ is the isotomic conjugate of $P_1'$ with respect to $A_1B_1C_1$; hence $P=\textsf{A}(P_1)$.  This shows that every point on $\mathcal{E}$ except the vertices and points at infinity is $P=\textsf{A}(P_1)$ for some affine mapping $\textsf{A}$ in the ``locus" of maps with $A_1 \in \mathscr{A}$.   \medskip

Now, each point $A_1$ on $\mathscr{A}$ yields two points on $\mathcal{E}$, since $\textsf{A}$ maps both points $P_1$ and $P_1'$ to points on $\mathcal{E}$.  Alternatively, with a given triangle $A_1B_1C_1$, there are affine maps $\textsf{A}, \widetilde{\textsf{A}}$ for which $\textsf{A}(A_1B_1C_1)=ABC$ and $\widetilde {\textsf{A}}(A_1C_1B_1)=ABC$.   We claim first that $\textsf{A}(P_1)=-\widetilde{\textsf{A}}(P_1)$, i.e., that $P=\textsf{A}(P_1)$ and $\tilde P=\widetilde{\textsf{A}}(P_1)$ are negatives on the curve $\mathcal{E}$ with respect to the addition on the curve.  This is equivalent to the fact that the line $P \tilde P$ through these two points is parallel to $BC$.   This is obvious from the fact that $\widetilde{\textsf{A}} = \rho \circ \textsf{A}$, where, as in the proof of Lemma \ref{lem:median}, $\rho$ is the affine reflection in the direction of the line $BC$, fixing the points on the median $AD_0=AG$.  \medskip

We claim now that the points on $\mathcal{E}-\{A,B,C,A_\infty, B_\infty, C_\infty\}$ are in $1-1$ correspondence with the collection of point-map pairs $(A_1,\textsf{A})$ and $(A_1,\widetilde{\textsf{A}})$ for $A_1 \in \mathscr{A}-\{P_1,Q_1,P_1',Q_1'\}$.  Suppose that $(A_1,\textsf{A}_1)$ and $(A_2,\textsf{A}_2)$ map to the same point $P$ on $\mathcal{E}$, for $A_1, A_2 \in \mathscr{A}-\{P_1,Q_1,P_1',Q_1'\}$.  Then $\textsf{A}_1(A_1B_1C_1)=ABC=\textsf{A}_2(A_2B_2C_2)$ or $\textsf{A}_1(A_1B_1C_1)=ABC=\textsf{A}_2(A_2C_2B_2)$, since the labeling of the points $B_i, C_i$  can be switched; and for these maps, $\textsf{A}_1(P_1)=P=\textsf{A}_2(P_1)$.  Then $\textsf{A}_1^{-1} \textsf{A}_2(A_2B_2C_2)=A_1B_1C_1$ or $A_1C_1B_1$ and $\textsf{A}_1^{-1} \textsf{A}_2(P_1)=P_1$.  But then the map $\textsf{A}_1^{-1} \textsf{A}_2$ also fixes the points $Q_1,P_1',Q_1'$, since $G_1$ is the centroid for both triangles.  Hence, $\textsf{A}_1^{-1} \textsf{A}_2$ is the identity and $\textsf{A}_1=\textsf{A}_2$, so that $A_1B_1C_1=\textsf{A}_1^{-1}(ABC)=A_2B_2C_2$ or $A_2C_2B_2$.  Therefore, $(A_2,\textsf{A}_2)=(A_1,\textsf{A}_1)$.
 \medskip
 
 Thus, we have proved the following.
 
 \begin{thm} Given a circle $\cC$ with center $Z_1$, points $Q_1$ and $P_1'$ on $\cC$, and point $O_1$ for which $Q_1Z_1P_1'O_1$ is a square, then with the points $G_1, P_1, S_1$ as in Figure \ref{fig:3.3}, the set of points
 $$\mathcal{E}-\{A,B,C,A_\infty, B_\infty, C_\infty\}$$
on the elliptic curve $\mathcal{E}$ coincides with the set of points $\textsf{A}(P_1)$, where $A_1 \in \mathscr{A}=P_1Q_1Q_1'P_1'$ is a point on the arc $\mathscr{A}$ distinct from the points in $\{P_1,Q_1,Q_1', P_1'\}$, $B_1, C_1$ are the unique points on $\mathscr{A}$ for which $A_1B_1C_1$ has centroid $G_1$, and $\textsf{A}$ is an affine map for which $\textsf{A}(A_1B_1C_1)=ABC$ or $\textsf{A}(A_1C_1B_1)=ABC$.
 \end{thm}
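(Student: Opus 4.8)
The plan is to read this statement as the synthesis of the entire section and to prove the asserted set equality by combining Theorem \ref{thm:curveE}, the construction in the proof of Theorem \ref{thm:GVZ}, and the parametrization worked out in the paragraphs immediately preceding the statement; the phrase ``Thus, we have proved the following'' signals that no genuinely new argument is needed, only a careful collection of both inclusions together with the bijectivity of the correspondence $(A_1,\textsf{A}) \mapsto \textsf{A}(P_1)$.

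First I would establish that every construction point lands on the curve. Given $A_1 \in \mathscr{A}-\{P_1,Q_1,Q_1',P_1'\}$, Theorem \ref{thm:GVZ} produces the unique pair $B_1,C_1$ on $\mathscr{A}$ for which $A_1B_1C_1$ has centroid $G_1$, and Proposition \ref{prop:ABCMht} shows that the associated map $\textsf{M}_1$ is a half-turn. Conjugating by any affine $\textsf{A}$ with $\textsf{A}(A_1B_1C_1)=ABC$ (or $\textsf{A}(A_1C_1B_1)=ABC$) keeps $\textsf{M}=\textsf{A}\circ\textsf{M}_1\circ\textsf{A}^{-1}$ a half-turn, so by Theorem \ref{thm:curveE} the point $P=\textsf{A}(P_1)$ lies in $\mathcal{E}-\{A,B,C,A_\infty,B_\infty,C_\infty\}$.

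Next I would prove the reverse inclusion. For $P$ in $\mathcal{E}-\{A,B,C,A_\infty,B_\infty,C_\infty\}$, Theorem \ref{thm:curveE} makes $\textsf{M}$ a half-turn, and Corollary \ref{cor:ZS} then gives that $QZP'O$ is a parallelogram. There is a unique affine map $\textsf{A}$ carrying the square $Q_1Z_1P_1'O_1$ to this parallelogram, and the proof of Theorem \ref{thm:GVZ} shows that $\textsf{A}^{-1}(ABC)=A_1B_1C_1$ has centroid $G_1$ and lies on $\mathscr{A}$. Lemma \ref{lem:CwithG} then identifies $P_1$ as the isotomic conjugate of $P_1'$ with respect to $A_1B_1C_1$, which forces $P=\textsf{A}(P_1)$; hence $P$ is a construction point, and the two sets coincide.

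The hard part will be the bijectivity of the correspondence, which I would argue exactly as in the two paragraphs before the statement. The two labelings of the pair $B_1,C_1$ are related by $\widetilde{\textsf{A}}=\rho\circ\textsf{A}$, where $\rho$ is the affine reflection fixing the median $AG$ in the direction of $BC$, so $\textsf{A}(P_1)$ and $\widetilde{\textsf{A}}(P_1)$ are negatives on $\mathcal{E}$ (joined by a line parallel to $BC$). If two admissible pairs yield the same $P$, then $\textsf{A}_1^{-1}\textsf{A}_2$ fixes $P_1$ and, because $G_1$ is the common centroid, also fixes $Q_1,P_1',Q_1'$; the key point — and the reason for excluding $\{P_1,Q_1,Q_1',P_1'\}$ from $\mathscr{A}$ — is that these four points are in sufficiently general position that an affine map fixing all four must be the identity, whence $\textsf{A}_1=\textsf{A}_2$ and the two point-map pairs coincide.
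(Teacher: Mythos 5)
Your proposal is correct and follows essentially the same route as the paper: the forward inclusion via Theorem \ref{thm:GVZ}, Proposition \ref{prop:ABCMht} and Theorem \ref{thm:curveE}; the reverse inclusion via Corollary \ref{cor:ZS}, the affine map carrying the square $Q_1Z_1P_1'O_1$ to the parallelogram $QZP'O$, and Lemma \ref{lem:CwithG}; and the injectivity argument exactly as in the two paragraphs preceding the statement. The only minor slip is your parenthetical about why $\{P_1,Q_1,Q_1',P_1'\}$ is excluded from $\mathscr{A}$ (it is so the vertices $A_1,B_1,C_1$ avoid these points, not because of the general position of the four fixed points), but this does not affect the argument.
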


Note finally that the discriminant of (\ref{eqn:2.2}) with respect to $y$ is $D=(x-1)(5x-1)(5x^2-2x+1)$, so (\ref{eqn:2.2}) is birationally equivalent to the curve
$$Y^2=(X-1)(5X-1)(5X^2-2X+1).$$
Putting $X=\frac{u}{u-4}, Y=\frac{8v}{(u-4)^2}$ shows that this curve is, in turn, birationally equivalent (over $\mathbb{Q}$) to
\begin{equation}
v^2=(u+1)(u^2+4),
\label{eqn:2.3}
\end{equation}
which has $j$ invariant $j=\frac{2^{4}11^3}{5^2}$.  This curve is curve (20A1) in Cremona's tables \cite{cre}, and has the torsion subgroup $T=\{O,(-1, 0), (0,\pm2), (4,\pm 10)\}$ of order $6$ and rank $r=0$ over $\mathbb{Q}$.  The curve $\mathcal{E}$ has infinitely many real points defined over quadratic extensions of $\mathbb{Q}$, including, for example,
$$P=(-4+\sqrt{19},-1,3), \ \ \left(\frac{9+\sqrt{89}}{2},-2,1\right).$$
This shows that there are infinitely many points for which the map $\textsf{M}$ is a half-turn.   There are even infinitely many such points defined over the field $\mathbb{Q}(\sqrt{6})$, since the points $(u,v)=(2,2\sqrt{6})$ and $(u,v)=(\frac{2}{3},\frac{10\sqrt{6}}{9})$ have infinite order on (\ref{eqn:2.3}).  \medskip

\end{section}

\noindent Dept. of Mathematics, Maloney Hall\\
Boston College\\
140 Commonwealth Ave., Chestnut Hill, Massachusetts, 02467-3806\\
{\it e-mail}: igor.minevich@bc.edu
\bigskip

\noindent Dept. of Mathematical Sciences\\
Indiana University - Purdue University at Indianapolis (IUPUI)\\
402 N. Blackford St., Indianapolis, Indiana, 46202\\
{\it e-mail}: pmorton@math.iupui.edu


\begin{thebibliography}{WWW}

\bibitem[1]{ac} N. Altshiller-Court, {\it College Geometry, An Introduction to the Modern Geometry of the Triangle and the Circle}, Barnes and Noble, New York, 1952. Reprinted by Dover Publications.

\bibitem[2]{co1} H.S.M. Coxeter, {\it The Real Projective Plane}, McGraw-Hill Book Co., New York, 1949.

\bibitem[3]{co2} H.S.M. Coxeter, {\it Projective Geometry}, $2^{nd}$ edition, Springer, 1987.

\bibitem[4]{cre} J.E. Cremona, {\it Algorithms for Modular Elliptic Curves}, Cambridge University Press, 1992.

\bibitem[5]{kna} A.W. Knapp, {\it Elliptic Curves}, Mathematical Notes 40, Princeton University Press, Princeton, New Jersey, 1992.

\bibitem[6]{mm0} I. Minevich and P. Morton, Synthetic Cevian Geometry, preprint, IUPUI Math. Dept. Preprint Series pr09-01, 2009, \texttt{http://math.iupui.edu/research/research-preprints}.

\bibitem[7]{mm1} I. Minevich and P. Morton, Synthetic foundations of cevian geometry, I: Fixed points of affine maps, \texttt{http://arXiv.org/abs/1504.00210}, to appear in J. of Geometry (2016), doi:10.1007/s00022-016-0324-4.

\bibitem[8]{mmq} I. Minevich and P. Morton, A quadrilateral half-turn theorem, Forum Geometricorum 16 (2016), 133-139.

\bibitem[9]{mm2} I. Minevich and P. Morton, Synthetic foundations of cevian geometry, II: The center of the cevian conic, \texttt{http://arXiv.org/abs/1505.05381}, Int. J. of Geometry 5 (2016), 22-38.

\bibitem[10]{mm3} I. Minevich and P. Morton, Synthetic foundations of cevian geometry, III: The generalized orthocenter, \texttt{http://arXiv.org/abs/1506.06253}, 2015, submitted.

\bibitem[11]{mm4} I. Minevich and P. Morton, Synthetic foundations of cevian geometry, IV: The TCC-perspector theorem, in preparation.

\bibitem[12]{mmv} I. Minevich and P. Morton, Vertex positions of the generalized orthocenter and a related elliptic curve,  \texttt{http://arXiv.org/abs/1608.04614}.

\bibitem[13]{wa} L. C. Washington, {\it Elliptic Curves, Number Theory and Cryptography}, 2nd edition, Chapman and Hall/CRC, Boca Raton, Florida, 2008. 

\end{thebibliography}
\end{document}